\newtheorem{thm}{Theorem}
\newtheorem{prop}[thm]{Proposition}
\newtheorem{lem}[thm]{Lemma}
\newtheorem{cor}[thm]{Corollary}
\newtheorem{rmk}[thm]{Remark}
\newtheorem{ex}[thm]{Example}
\newtheorem{defn}[thm]{Definition}
\numberwithin{thm}{section}
\numberwithin{equation}{section}
\newcommand{\Ric}{\mathrm{Ric}}
\newcommand\widecheck[1]{%
\savestack{\tmpbox}{\stretchto{%
  \scaleto{%
    \scalerel*[\widthof{\ensuremath{#1}}]{\kern-.6pt\bigwedge\kern-.6pt}%
    {\rule[-\textheight/2]{1ex}{\textheight}}
  }{\textheight}%
}{0.5ex}}%
\stackon[1pt]{#1}{\scalebox{-1}{\tmpbox}}%
}
\title{On compact quasi-Einstein metrics of constant scalar curvature}
\author{Eric Cochran}
\date{}
\begin{document}

\maketitle

\begin{abstract}
    We show that all compact quasi-Einstein metrics of constant scalar curvature in dimension three are locally homogeneous.  We accomplish this by using the equivalence of constant scalar curvature quasi-Einstein metrics $(M,g,X)$ and quasi-Einstein metrics when $X$ is Killing to make a connection to Sasakian geometry in dimension three.  In higher dimensions, there are examples which are non-locally homogeneous with constant scalar curvature.  Such examples were constructed by Kunduri-Lucietti in \cite{Kunduri12} as circle bundles over a compact K\"ahler-Einstein base.  We then ask when compact quasi-Einstein metrics of constant scalar curvature can be constructed as circle bundles over Einstein metrics, and prove that the base must in fact be K\"ahler-Einstein, assuming a conjecture due to Goldberg (see \cite{Goldberg69}).  These spaces, in fact, admit one parameter families of quasi-Einstein metrics by considering the \textit{canonical variation}, which we study further.
\end{abstract}

\section{Introduction}

The notion of a \textit{quasi-Einstein metric} generalizes Einstein metrics by equipping a Riemannian manifold $(M,g)$ with a smooth vector field $X$. 

\begin{defn}
    Let $(M,g)$ be a Riemannian manifold, and let $X$ be a smooth vector field defined on $X$.  The triple $(M,g,X)$ is said to be \textbf{quasi-Einstein} if
        \begin{align} \label{qe}
            \Ric + \frac{1}{2}\mathcal{L}_Xg - \frac{1}{m}X^* \otimes X^* = \lambda g
        \end{align}
    where $m \neq 0$ and $\lambda$ are constants, $X^*$ is one form dual to the vector field $X$ induced by $g$, and $\mathcal{L}$ is the Lie derivative.
\end{defn}

\noindent  In this paper, we will assume $M$ has no boundary.  Observe that the above definition reduces to the Einstein condition when $X$ is identically zero (we will call such solutions \textit{trivial}).

The tensor on the left-hand side of (\ref{qe}) is known as the \textit{Bakry-\'Emery} Ricci tensor.  Bakry and \'Emery studied this tensor as it pertains to diffusion processes in \cite{Bakry85}.  Qian studied Comparison geometry for this tensor in \cite{Qian97}.

When $m=2$, solutions to (\ref{qe}) are known as \textit{near-horizon geometries}, and are therefore of interest in general relativity.  See \cite{Kunduri09} for a discussion.  In the limiting case, when $m=\infty$, solutions to (\ref{qe}) as known as \textit{Ricci solitons}, which are self-similar solutions to the Ricci flow.  

Case, Shu, and Wei studied solutions to (\ref{qe}) in the gradient case (i.e. where $X = \nabla \phi$ for some smooth function $\phi$ defined on $M$) in \cite{Case08}.  Among other things, they showed that there do not exist any non-trivial, compact quasi-Einstein metrics of constant scalar curvature.  It is natural to ask whether or not we can obtain solutions if we no longer require $X$ to be a gradient field.  If the $X$ is gradient condition is relaxed to the requirement that $dX^*=0$, work done in \cite{Bahuaud22} and \cite{Chrusciel05} shows that there are still no non-trivial solutions with constant scalar curvature when $\lambda \geq 0$, and when $\lambda < 0$. There is only a simple class of non-trivial examples in this case, namely those where $M = S^1 \times N$ is a product, and $X$ is constant length and tangent to the fibers of $S^1$ (see \cite{Wylie23}).  

The story is more interesting if we no longer assume $dX^*=0$.  In \cite{Chen16}, Chen-Liang-Zhu showed that most compact simple Lie groups admit non-trivial quasi-Einstein metrics.  In \cite{Lim22}, Lim studied and classified quasi-Einstein metrics on locally homogeneous 3-manifolds.  We recall the definition of locally homogeneous now:  

\begin{defn}
    A Riemannian manifold $(M,g)$ is called \textbf{locally homogeneous} if around any points $x, y \in M$, there exist neighborhoods $U_x$ of $x$ and $U_y$ of $y$ such that there exists an isometry $\Phi$ mapping $U_x$ onto $U_y$ such that $\Phi(x) = y$.
\end{defn}

Since locally homogeneous manifolds always have constant scalar curvature, Lim's examples show that removing the $dX^* = 0$ assumption allows for a number of more interesting examples.  Among these are circle bundles over $S^2$ known as \textit{Berger spheres}.

In the quest to find additional examples of compact quasi-Einstein metrics of constant scalar curvature, it is natural to ask whether or not there exist examples of quasi-Einstein metrics which have constant scalar curvature, but are not locally locally homogeneous.  This would provide a new class of examples in addition to the ones studied by Chen-Liang-Zhu and Lim mentioned above.  In higher dimensions, the answer to this question is ``yes".  A class of examples of such metrics can be realized as circle bundles over a K\"ahler-Einstein base, which are studied by Kunduri and Lucietti in \cite{Kunduri12}.

However, one of the main results in this paper is that the answer to the above question is ``no" in the three dimensional case. 

\begin{thm} \label{thm:1}
    Let $(M, g, X)$ be a quasi Einstein metric where $\mathrm{dim} \, M = 3$, $M$ is compact, and $(M, g)$ has constant scalar curvature.  Then $(M, g)$ is locally homogeneous.
\end{thm}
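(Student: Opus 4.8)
The plan is to leverage the dimensional constraint heavily, since in dimension three the Weyl tensor vanishes and the full curvature is determined by the Ricci tensor. The author signals in the abstract that the key tool is a result from \cite{Cochran2024}: when the scalar curvature is constant, a quasi-Einstein triple $(M,g,X)$ is equivalent to one where $X$ is Killing. So the very first step is to invoke that equivalence to reduce to the case that $X$ is a nonzero Killing field, which kills the $\mathcal{L}_X g$ term up to the symmetrized-gradient structure and turns (\ref{qe}) into an algebraic relation between $\Ric$, $g$, and the rank-one tensor $X^*\otimes X^*$.

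\medskip

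With $X$ Killing and nonzero, I would next set up an adapted frame. Away from the (measure-zero) zero set of $X$, normalize to get a unit vector field $\xi = X/|X|$ and complete it to a local orthonormal frame $\{\xi, e_1, e_2\}$. The quasi-Einstein equation then reads $\Ric = \lambda g + \frac{1}{m}X^*\otimes X^*$, so $\Ric$ has $\xi$ as an eigenvector with eigenvalue $\lambda + |X|^2/m$ and acts as $\lambda\cdot\mathrm{Id}$ on the orthogonal plane. Thus $\Ric$ has (at most) two distinct eigenvalues with the distinguished one in the direction of the Killing field — exactly the Ricci structure of a Sasakian or, more generally, a contact/almost-cosymplectic three-manifold. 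The constant scalar curvature hypothesis forces $|X|^2$ (hence the eigenvalues) to be constant, which is the crucial rigidity: I expect that $|X|$ is constant follows by tracing (\ref{qe}) and using $\mathrm{scal}=\text{const}$ together with the Killing property. This is where the connection to Sasakian geometry advertised in the abstract enters.

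\medskip

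The heart of the argument is then to show that this eigenvalue structure, combined with $X$ being Killing of constant length, makes $(M,g)$ locally homogeneous. The strategy I would pursue is to show that $\xi$ generates (up to rescaling) a contact metric structure and that the Ricci eigenvalues being constant forces the structure to be Sasakian with constant curvature data; three-dimensional Sasakian manifolds with such constant Ricci eigenvalues are locally homogeneous (they are modeled on the standard Sasakian space forms / the Bianchi-type geometries classified in Lim's work). Concretely, I would compute $\nabla\xi$ and the relevant structure functions in the adapted frame, show they are constant, and invoke the classification of three-dimensional locally homogeneous spaces: a three-manifold whose curvature tensor and its covariant derivative have constant components in a suitable frame is locally homogeneous (Singer's theorem, or the direct Milnor-frame classification). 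Since $\dim M = 3$, verifying local homogeneity reduces to checking that the structure constants of an adapted (Milnor-type) frame are constant.

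\medskip

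The main obstacle I anticipate is controlling the behavior of $X$ near its zero set and establishing that $|X|$ is genuinely constant rather than merely having a distinguished eigendirection pointwise. If $X$ vanishes on a nonempty set, the frame degenerates there and one must argue either that the zero set is empty (using compactness and the Killing/quasi-Einstein structure) or that local homogeneity extends across it by continuity. A secondary difficulty is making the Sasakian identification precise: $\xi$ need not be exactly a unit Reeb field for a metric contact structure, so some rescaling and verification of the contact condition $d\eta(\cdot,\cdot) = 2g(\cdot,\phi\cdot)$ is required before the Sasakian classification can be applied. Once constancy of the eigenvalues and the structure functions is in hand, though, local homogeneity in dimension three should follow from the standard classification, so I expect the bulk of the work to lie in the constant-length and frame-normalization steps rather than in the final homogeneity conclusion.
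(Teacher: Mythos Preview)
Your outline is essentially the paper's approach: invoke \cite{Cochran2024} to get $X$ Killing of constant length, normalize to a unit field $\xi=X/|X|$, identify a Sasakian structure, and deduce local homogeneity from a classification together with Singer's theorem. Two of your anticipated obstacles dissolve immediately: once $|X|$ is a nonzero constant there is no zero set to worry about, and the normalization you flag is handled in the paper by a global rescaling $\tilde g=t^2 g$ chosen so that $\lambda+|X|^2/m=2$, which forces every sectional curvature containing $\xi$ to equal $1$ and hence makes $(M,\tilde g)$ genuinely Sasakian. From there the paper observes that the quasi-Einstein equation makes the structure $\eta$-Einstein, hence of constant $\phi$-sectional curvature, and invokes Tanno's classification of such Sasakian spaces (plus invariance of homogeneity under $\mathcal{D}$-homothety) rather than a direct Milnor-frame computation; it also treats separately the degenerate case $dX^*=0$ (equivalently $\nabla X=0$), which gives a global product $N\times S^1$ and which your sketch does not single out. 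Your proposed alternative of showing the structure constants of an adapted frame are constant is plausible, but would need care: constant Ricci eigenvalues alone do not force local homogeneity in dimension three (there are curvature-homogeneous, non-homogeneous examples), so the Killing property of $\xi$ must be used in an essential way, and the Sasakian/Tanno route is precisely a clean packaging of that.
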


To obtain the above result, we use the fact that a compact quasi-Einstein metric of constant scalar curvature must have $X$ Killing. This result was first shown by Ghosh in Theorem 4.2 of \cite{Ghosh20}.  In fact, it follows from equation (5.18) of \cite{Ghosh20} that the converse is also true, and so the conditions of $(M,g)$ having constant scalar curvature and $X$ being Killing are equivalent in the compact case (see also \cite{Cochran2024} and \cite{CostaFilho24}, whose authors were unaware of Ghosh's results at the time of publication).  As shown in \cite{Bahuaud24}, it also follows that $X$ has constant length and that the integral curves of $X$ are geodesics, in addition to being Killing.  Thus, a compact quasi-Einstein metric $(M,g,X)$ of constant scalar curvature necessarily admits a constant length Killing field (namely $X$).  In Section 2, we outline how, in the three dimensional case, this shows that there exists a Sasakian structure of constant $\phi$-sectional curvature.  We then invoke a classification due to Tanno in \cite{Tanno69} to conclude that Theorem \ref{thm:1} holds.

Since there exists a constant length Killing field in the constant scalar curvature case, $M$ admits either an $S^1$ action or an $\mathbb{R}$ action by isometries generated by the flow of $X$ (depending on whether or not the integral curves generated by $X$ are closed or open, respectively).  If we assume that this action is a free $S^1$ action, there exists a Riemannian submersion $\pi: M \to B$ with totally geodesic $S^1$ fibers (the fact that the integral curves of $X$ are geodesics in this case in also covered in the result stated in \cite{Bahuaud24}).  We discuss this in more detail in Section 5, in particular, how it relates to the three dimensional case.  

A result due to \cite{Vilms70} (which is also stated in \cite{Besse87}) connects Riemannian submersion with totally geodesic fibers with principal $G$-bundles.  In particular, when $G=S^1$, the result connects principal circle bundles to Riemannian submersion with totally geodesic $S^1$ fibers.   

In Section 3, motivated by the examples in \cite{Kunduri12} and the above discussion, we find necessary conditions for when quasi-Einstein metrics of constant scalar curvature can be constructed as principal circle bundles over a compact base $B$.  Let $(M,g,X)$ be a quasi-Einstein metric of constant scalar curvature that admits a Riemannian submersion $\pi: (M,g) \to B$ with totally geodesic $S^1$ fibers such that $X$ is tangent to the fibers of $\pi$.  The first key observation we make is that $B$ must have constant scalar curvature.  This generalizes a result in \cite{Besse87}, which states that Einstein metrics constructed in this fashion must have $B$ with constant scalar curvature.  From there, we show that if we have an example of a quasi-Einstein metric of constant scalar curvature constructed as principal circle bundles of a compact Einstein base, then the base must have an almost K\"ahler structure.  A conjecture due to Goldberg in \cite{Goldberg69} postulates that the existence of an almost K\"ahler structure on a compact Einstein manifold $M$ implies $M$ is K\"ahler-Einstein.  In \cite{Sekigawa87}, Sekigawa shows this conjecture is true when $\lambda \geq 0$.  If this conjecture is true for all $\lambda$, then this would show that the examples over a K\"ahler-Einstein base are the only examples over a compact Einstein base.  

We state the result in terms of principal $S^1$ bundles, recalling that principal circle bundles over $B$ are characterized by elements of $H^2(B, \mathbb{Z})$.

\begin{thm} \label{thm:2}
    Let $(B, \check{g})$ be a compact Einstein manifold and let $p: P \to B$ be a principle $S^1$-bundle on $B$ with fibers of length $2\pi$, classified by $\alpha \in H^2(B, \mathbb{Z})$.  Suppose further that $X$ is a smooth vector field on $P$ tangent to the fibers of $\pi$.  If $(P,g,X)$ is an $S^1$-invariant quasi-Einstein metric of constant scalar curvature, then either:
        \begin{itemize}
            \item[(a)] $\check{\lambda} + \frac{|X|^2}{m} = 0$, $\alpha = 0$, and $P$ is the Riemannian product $B \times S^1$ (i.e. the circle bundle is trivial), or
            \item[(b)] $\check{\lambda} + \frac{|X|^2}{m} > 0$ and there exists an almost K\"ahler structure $(J, \check{g}, \omega')$ on $B$ such that $[\omega] \in H^2(B,\mathbb{Z})$ is a multiple of $\alpha$.
        \end{itemize}
    where $g$ is the unique metric on $P$ making $p$ a Riemannian submersion with totally geodesic fibers of length $2\pi$.
\end{thm}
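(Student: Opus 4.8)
The plan is to write the total space metric in Kaluza--Klein form and to decompose the quasi-Einstein equation into components along the fibers and along the base. By the results of \cite{Cochran2024,Bahuaud24,CostaFilho24} recalled in the introduction, constant scalar curvature forces $X$ to be a Killing field of constant length; since $X$ is vertical and the fibers are one-dimensional, $X = a\,\xi$, where $\xi$ is the unit fundamental vector field of the $S^1$-action and $a$ is a constant with $|X|^2 = a^2$. Writing $\theta = \xi^\flat$ for the associated connection one-form, the submersion metric is $g = p^*\check g + \theta\otimes\theta$ with $d\theta = p^*\Omega$, where $\Omega$ is the curvature two-form on $B$; note $\Omega$ is automatically closed, since $p^*(d\Omega) = d^2\theta = 0$. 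Because $X$ is Killing we have $\mathcal{L}_X g = 0$, so \eqref{qe} collapses to $\Ric = \lambda g + \tfrac1m X^*\otimes X^* = \lambda g + \tfrac{a^2}{m}\,\theta\otimes\theta$.

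Next I would insert the O'Neill formulas for a Riemannian submersion with totally geodesic fibers and split this tensor identity into its vertical–vertical, vertical–horizontal, and horizontal–horizontal parts. Let $\Omega^\sharp$ be the skew-symmetric endomorphism of $TB$ with $\check g(\Omega^\sharp Y,Z)=\Omega(Y,Z)$. The three components yield, respectively: $\tfrac14|\Omega|^2 = \lambda + \tfrac{a^2}{m}$; the vanishing of $\Ric(\xi,\cdot)$ on horizontal vectors, i.e. $\delta\Omega = 0$, so that $\Omega$ is harmonic; and $\Ric_{\check g} - \tfrac12\,\Omega^2 = \lambda\,\check g$, where $\Omega^2(Y,Z)=\check g(\Omega^\sharp Y,\Omega^\sharp Z)$. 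Since $(B,\check g)$ is Einstein, $\Ric_{\check g}=\check\lambda\,\check g$, and the third equation becomes the pointwise condition that $(\Omega^\sharp)^2 = 2(\lambda-\check\lambda)\,\mathrm{Id}$ is a constant multiple of the identity.

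I would then combine the scalar relations. Taking the trace of $(\Omega^\sharp)^2 = 2(\lambda-\check\lambda)\,\mathrm{Id}$ gives $|\Omega|^2 = -\operatorname{tr}\bigl((\Omega^\sharp)^2\bigr) = 2n(\check\lambda - \lambda)$ with $n = \dim B$; feeding this into $\tfrac14|\Omega|^2 = \lambda + \tfrac{a^2}{m}$ and eliminating $\lambda$ produces $(\Omega^\sharp)^2 = -\tfrac{4}{n+2}\bigl(\check\lambda + \tfrac{a^2}{m}\bigr)\,\mathrm{Id}$. Because $\Omega^\sharp$ is real and skew-symmetric, $(\Omega^\sharp)^2$ is negative semidefinite, which forces $\check\lambda + \tfrac{a^2}{m}\ge 0$ — this is precisely what rules out any case beyond (a) and (b).

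Finally I would split according to the sign. If $\check\lambda + \tfrac{a^2}{m}=0$ then $(\Omega^\sharp)^2=0$, hence $\Omega\equiv 0$; the connection is flat, the horizontal distribution is integrable and, together with the totally geodesic fibers, parallel, so the metric is locally a product, and I would conclude $P = B\times S^1$ with $\alpha=0$. This last triviality statement is the step I expect to be the main obstacle, since a flat principal circle bundle is a priori only forced to have torsion real Euler class, and one must use the closed totally geodesic fibers together with the de Rham splitting to upgrade local triviality to genuine triviality. If instead $\check\lambda + \tfrac{a^2}{m}>0$, set $c = \tfrac{4}{n+2}\bigl(\check\lambda + \tfrac{a^2}{m}\bigr)>0$, so that $(\Omega^\sharp)^2=-c\,\mathrm{Id}$, and define $J := \Omega^\sharp/\sqrt{c}$ (in particular $n$ is even); then $J^2=-\mathrm{Id}$, $J$ is $\check g$-orthogonal, and the fundamental form $\omega = \check g(J\cdot,\cdot) = \Omega/\sqrt{c}$ is closed because $\Omega$ is, so $(J,\check g,\omega)$ is an almost K\"ahler structure. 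Since $[\Omega]$ is a real multiple of the Euler class $\alpha$, so is $[\omega]$, which gives case (b). The only remaining care is in fixing the normalization constants in the O'Neill formulas, which I would verify against the standard references.
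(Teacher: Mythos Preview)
Your approach is essentially the same as the paper's. The paper first records, as Propositions 3.3 and 3.8, exactly the three O'Neill components you write down (Yang--Mills condition $\delta\Omega=0$; the vertical equation $\tfrac14|\Omega|^2=\lambda+\tfrac{|X|^2}{m}$; and the horizontal equation $\widecheck{\Ric}-\tfrac12\Omega^2=\lambda\check g$), and then in the proof of the theorem imposes the Einstein condition on $B$, defines $J$ by $\omega(Y,Z)=\check g(JY,Z)$, and reads off that $-J^2$ is a constant multiple of the identity, positive precisely when $\check\lambda+\tfrac{|X|^2}{m}>0$. Your write-up is in fact a bit cleaner on two points: you eliminate $|\Omega|^2$ via the trace to obtain the explicit constant $c=\tfrac{4}{n+2}\bigl(\check\lambda+\tfrac{|X|^2}{m}\bigr)$, and you justify the sign by invoking negative semidefiniteness of $(\Omega^\sharp)^2$, whereas the paper only says ``positive definiteness of the metric and $\omega$''. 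You are also right to flag that $\Omega\equiv 0$ a priori only gives a flat bundle (torsion real Euler class) rather than $\alpha=0$; the paper does not address this either and simply asserts triviality, so this is a shared soft spot rather than a defect of your argument relative to the paper's.
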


In \cite{Besse87}, there is an analogous result stated in the Einstein case.  Something interesting to observe is that in the Einstein case, one must have $\check{\lambda} \geq 0$.  However, in the quasi-Einstein case, there are examples with $\check{\lambda} \leq 0$.

Furthermore, since almost K\"ahler structures can only exist on even dimensional manifolds, we easily obtain the following corollary.  This provides a nice rigidity result for when quasi-Einstein metrics of constant scalar curvature can be constructed over an Einstein base.

\begin{cor} \label{cor:1}
    Let $(B, \check{g})$ be a compact Einstein manifold, and suppose $p: P \to B$ is a non-trivial principle $S^1$-bundle over $B$ with $X$ tangent to the fibers of $p$.  If $P$ admits a quasi-Einstein metric of constant scalar curvature, then $P$ must be odd dimensional.
\end{cor}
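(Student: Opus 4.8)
The plan is to read this off directly from Theorem \ref{thm:2}, applied in the setting of that theorem: the quasi-Einstein metric under consideration is the $S^1$-invariant, constant scalar curvature metric $g$ on $P$ making $p$ a Riemannian submersion with totally geodesic fibers of length $2\pi$, with $X$ tangent to the fibers. The only work is to exclude the trivial alternative and then invoke an elementary parity constraint.

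First I would observe that, because $p : P \to B$ is a \emph{non-trivial} principal $S^1$-bundle, its classifying class $\alpha \in H^2(B,\mathbb{Z})$ is nonzero. Theorem \ref{thm:2} then offers two alternatives, and alternative (a) cannot hold: its conclusion forces $\alpha = 0$ (indeed it identifies $P$ with the Riemannian product $B \times S^1$). Hence alternative (b) must hold.

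Alternative (b) provides an almost K\"ahler structure $(J, \check{g}, \omega')$ on $B$; in particular $B$ carries an almost complex structure $J$, which makes each tangent space $T_bB$ a complex vector space and therefore forces $\dim B$ to be even (equivalently, the nondegenerate two-form $\omega'$ of an almost K\"ahler structure can only exist in even dimension). Since $P$ is the total space of a principal $S^1$-bundle over $B$, we have $\dim P = \dim B + 1$, which is odd, as claimed.

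The argument is essentially immediate once Theorem \ref{thm:2} is in hand; the only point requiring care is confirming that the hypotheses of that theorem are met — in particular that the quasi-Einstein metric in question is the submersion metric $g$ and is $S^1$-invariant of constant scalar curvature — so that the dichotomy (a)/(b) applies. I expect no substantive obstacle beyond this bookkeeping.
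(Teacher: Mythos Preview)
Your argument is correct and matches the paper's own reasoning: the corollary is derived immediately from Theorem \ref{thm:2} by excluding alternative (a) via nontriviality of the bundle, and then noting that the almost K\"ahler structure in (b) forces $\dim B$ to be even. The paper does not spell out more detail than this, so your caveat about verifying the hypotheses of Theorem \ref{thm:2} is appropriate bookkeeping but requires no additional ideas.
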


Quasi-Einstein metrics of constant scalar curvature which are circle bundles over an odd dimensional base are possible.  In Section 3, we will elaborate on how an example in \cite{Valiyakath25} fits this criterion.  The base is not Einstein in this example, but it is locally homogeneous.

In Section 4, we study special kinds of deformations of these quasi-Einstein metrics with are constructed as principal $S^1$ bundles.  These deformations involves scaling the metric in the direction of the $S^1$ fibers by a factor of $t$.  This family of metrics is called the \textit{canonical variation}.  We show that two distinct quasi-Einstein metrics exist in the canonical variation if and only if the base space is Einstein.  Therefore, if $B$ is not Einstein, then there exists at most one quasi-Einstein metric in the canonical variation of $\pi$.  From this result, we obtain the following corollary.

\begin{cor} \label{cor:2}
    Let $(B, \check{g})$ be a compact Einstein manifold, and suppose $p: P \to B$ is a nontrivial principle $S^1$ bundle with $\check{\lambda} > 0$ and $m>0$.  Then $P$ admits an Einstein metric if and only if $P$ admits a quasi-Einstein metric.  Moreover, $P$ admits a 1-parameter family of quasi-Einstein metics in this case, including metrics with $\lambda > 0$, $\lambda=0$, and $\lambda<0$. 
\end{cor}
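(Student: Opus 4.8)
The plan is to realize all the relevant metrics as members of a single canonical variation and to read the statement off from the dependence of $\lambda$ and $|X|^{2}$ on the fiber-scaling parameter. Fix a principal connection $\theta$ on $p\colon P\to B$ whose curvature $d\theta=p^{*}\Omega$ has $\Omega$ the harmonic representative of its class, and write the canonical variation as $g_{t}=p^{*}\check g+t^{2}\,\theta\otimes\theta$ for $t>0$, so the fibers have length $2\pi t$. Because $X$ is tangent to the fibers and $S^{1}$-invariant it is a constant multiple of the fundamental vertical field, hence Killing for every $g_{t}$; thus $\mathcal L_{X}g_{t}=0$ and (\ref{qe}) collapses to $\Ric_{g_{t}}=\lambda g_{t}+\tfrac1m X^{*}\otimes X^{*}$. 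I would then insert the O'Neill formulas for a submersion with totally geodesic fibers (as assembled in Section 4) and split this identity into horizontal, vertical, and mixed parts, the mixed part vanishing since $\Omega$ is coclosed.

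The horizontal part receives no contribution from $X^{*}\otimes X^{*}$ and reads $\check{\Ric}-\tfrac{t^{2}}{2}\Omega^{2}=\lambda\,\check g$, where $\Omega^{2}(Y,Z):=\sum_{k}\Omega(Y,e_{k})\Omega(Z,e_{k})$ for a horizontal orthonormal frame. Since $(B,\check g)$ is Einstein this forces $\Omega^{2}=\mu\,\check g$; taking the $\check g$-trace gives $\mu=\lVert\Omega\rVert^{2}/n>0$ because the bundle is nontrivial ($\alpha\neq 0$, so $\Omega\not\equiv 0$), and then $J:=\mu^{-1/2}J_{\Omega}$, defined by $\Omega(\cdot,\cdot)=\check g(J_{\Omega}\cdot,\cdot)$, is a $\check g$-compatible almost complex structure, recovering the almost K\"ahler structure of Theorem \ref{thm:2}. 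This same identity $\Omega^{2}\propto\check g$ with $\mu>0$ is exactly what an Einstein metric of submersion type forces as well, which is the crux of the equivalence: by Theorem \ref{thm:2}(b) a quasi-Einstein metric in this class produces it (using $\check\lambda>0$, $m>0$), and conversely it is what lets us solve the Einstein equation, so the existence of an Einstein and of a quasi-Einstein metric are each equivalent to $\Omega^{2}\propto\check g$, hence to one another. This component also pins down $\lambda=\check\lambda-\tfrac\mu2 t^{2}$.

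Substituting $\Omega^{2}=\mu\check g$ and this value of $\lambda$ into the vertical part yields one scalar relation, which I expect to reduce to $|X|^{2}=m\big(\tfrac{\mu(n+2)}{4}\,t^{2}-\check\lambda\big)$ with $n=\dim B$. As $m>0$, this is solvable with $|X|^{2}\ge 0$ precisely for $t\ge t_{0}$, where $t_{0}^{2}=\tfrac{4\check\lambda}{\mu(n+2)}>0$ (here $\check\lambda>0$ enters), and $|X|=0$ only at $t=t_{0}$. Hence $g_{t_{0}}$ is the Einstein metric while every $g_{t}$ with $t>t_{0}$ is genuinely quasi-Einstein, giving the one-parameter family $\{g_{t}\}_{t\ge t_{0}}$ (with $m$ fixed and $|X|$ varying along the family). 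Finally $\lambda(t)=\check\lambda-\tfrac\mu2 t^{2}$ decreases strictly from the positive value $\lambda(t_{0})=\tfrac{n}{n+2}\check\lambda$ to $-\infty$, so by the intermediate value theorem it is positive near $t_{0}$, vanishes at a unique $t_{1}>t_{0}$, and is negative beyond, producing members with $\lambda>0$, $\lambda=0$, and $\lambda<0$.

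The main obstacle I anticipate is not any one computation but keeping the two existence statements correctly welded through the shared identity $\Omega^{2}=\mu\check g$: I must invoke Theorem \ref{thm:2} to extract it (and $\mu>0$) from a hypothetical quasi-Einstein metric, and independently check that an Einstein metric of submersion type forces the same identity, so that both directions of the ``if and only if'' route through the canonical variation. Aligning the positivity conditions --- $\mu>0$ from nontriviality, $t_{0}^{2}>0$ from $\check\lambda>0$, and $|X|^{2}>0$ only for $t>t_{0}$ from $m>0$ --- is where the care lies; once they are in place the sign analysis of $\lambda(t)$ is immediate.
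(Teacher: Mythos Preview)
Your proposal is correct and follows essentially the same strategy as the paper: both arguments live in the canonical variation, solve for $|X|^{2}$ and $\lambda$ as functions of the fiber-scaling parameter, locate the Einstein metric at the parameter value where $|X|=0$, and then read off the three signs of $\lambda$ from monotonicity. The differences are cosmetic --- you scale the vertical direction by $t^{2}$ and work with the curvature form $\Omega$ and the identity $\Omega^{2}=\mu\,\check g$, whereas the paper scales by $t$, works with the O'Neill tensor $A$, and cites the earlier Lemma \ref{lem:c_t} and Theorem \ref{qe-b} to produce the family $X_{t}=c_{t}U$ before setting $c_{t}=0$. One small simplification you could make: since an Einstein metric is trivially quasi-Einstein with $X=0$, only the direction ``quasi-Einstein $\Rightarrow$ Einstein'' needs argument, so routing both sides of the biconditional through $\Omega^{2}\propto\check g$ is more than the paper does (it simply starts from an assumed quasi-Einstein metric and finds the Einstein one in its canonical variation).
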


It is interesting to compare this to the Einstein case.  Theorem 9.73 of \cite{Besse87} states in part that, if there are two Einstein metrics in the canonical variation of $\pi$, then the fibers of $\pi$ must have positive scalar curvature.  Since we are interested in the case of $S^1$ fibers, this implies that it is impossible to have multiple Einstein metrics in the canonical variation of $\pi$ when one considers $S^1$ fibers.  Since the vector field $X$ in the definition of a quasi-Einstein metric provides an extra degree of freedom of possible metrics, the fact that the analogous result for Einstein metrics is more rigid agrees with our intuition. 

\section{Sasakian Manifolds and quasi-Einstein metrics}

In this section, we will explore a relationship between Sasakian manifolds and quasi-Einstein metrics in dimension three.  We will then proceed to use a classification result due to Tanno \cite{Tanno69} of Sasaskian manifolds of constant $\phi$-sectional curvature to obtain a classification result about quasi-Einstein metrics in dimension three, which will imply Theorem \ref{thm:1}.  See also \cite{Ghosh19} for additional results between $K$-contact structures, quasi-Einstein metrics, and Sasakian manifolds.

\begin{defn}
    A Riemannian manifold $(M,g)$ is called \textbf{Sasakian} if its metric cone $g = (M \times \mathbb{R}, \: t^2g + dt^2)$ is K\"ahler with K\"ahler form $t^2d\theta + 2t \, dt \wedge \theta$ where $\theta$ is a one form on $(M,g)$.
\end{defn}

In the case we are considering, there is a very useful characterization for when a manifold $(M,g)$ is Sasakian, which is proved in Proposition 1.1.2 of \cite{Boyer01}.

\begin{prop} [\cite{Boyer01}]
    $(M, g)$ is Sasakian if and only if there exists a unit length Killing field $\xi$ on $M$ such that the $(1,1)$-tensor $\Phi$ defined by $\phi(Y) = -\nabla_Y \xi$ satisfies the following property:
        \[(\nabla_Y \phi)(Z) = g(\xi, Z)Y - g(Y, Z)\xi \]
    \noindent where $Y, Z \in \mathfrak{X}(M)$.
\end{prop}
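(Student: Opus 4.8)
The plan is to prove the equivalence by translating between geometric data on the metric cone $C(M) = (M \times \mathbb{R}_{>0}, \, \bar g = dt^2 + t^2 g)$ and data on the base $(M,g)$, using the standard fact that a $\bar g$-orthogonal almost complex structure $J$ makes $C(M)$ K\"ahler precisely when it is parallel, $\bar\nabla J = 0$, where $\bar\nabla$ is the Levi-Civita connection of $\bar g$. The computational engine is the family of warped-product (O'Neill) formulas for $\bar\nabla$: writing $\psi = t\,\partial_t$ for the Euler field and letting $X, Y$ denote fields tangent to the slices $\{ t = \mathrm{const} \}$, one has $\bar\nabla_{\partial_t}\partial_t = 0$, $\bar\nabla_X \partial_t = \tfrac1t X$, and $\bar\nabla_X Y = \nabla_X Y - t\, g(X,Y)\,\partial_t$. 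An immediate and repeatedly used consequence is $\bar\nabla_Z \psi = Z$ for all $Z$, i.e. $\bar\nabla \psi = \mathrm{id}$.

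For the forward implication I would assume $C(M)$ K\"ahler and set $\xi := J\psi$. Since $J$ is orthogonal and skew, $\xi$ is orthogonal to both $\psi$ and $\partial_t$, hence tangent to the slices, with $\bar g(\xi,\xi) = \bar g(\psi,\psi) = t^2$; thus $\xi$ restricts to a $g$-unit field on $M$. From $\bar\nabla J = 0$ and $\bar\nabla\psi = \mathrm{id}$ I get the clean identity $\bar\nabla_Z \xi = JZ$, and skew-symmetry of $J$ then shows $\xi$ is Killing on $C(M)$; being tangent to the slices, along which $\bar g = t^2 g$, its restriction is Killing on $(M,g)$. Substituting $\bar\nabla_X\xi = JX$ into the cone formula $\bar\nabla_X\xi = \nabla_X\xi - t\,g(X,\xi)\,\partial_t$ and separating slice-tangent from $\partial_t$ parts identifies the slice-tangent component of $JX$ with $\nabla_X\xi = -\phi X$, recovering $\phi = -\nabla\xi$. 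Differentiating once more and again imposing $\bar\nabla J = 0$, I would expand $\bar\nabla_X(JY) = J\,\bar\nabla_X Y$ with the cone formulas: the $\partial_t$-components reproduce the Killing relation for $\xi$, while the slice-tangent components collapse exactly to the stated structure equation $(\nabla_X\phi)Y = g(\xi,Y)X - g(X,Y)\xi$, with the overall sign fixed by the conventions chosen above.

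For the converse I would begin from a $g$-unit Killing field $\xi$ satisfying the structure equation and first extract the ancillary almost-contact identities that a unit Killing field forces: $\nabla_\xi\xi = 0$ (whence $\phi\xi = 0$), skew-symmetry of $\phi = -\nabla\xi$, and $\phi^2 = -\mathrm{id} + \eta\otimes\xi$ with $\eta = g(\xi,\cdot)$. I would then define $J$ on $C(M)$ by $J\psi = \xi$, $J\xi = -\psi$, and $JX = -\phi X$ on the $g$-orthogonal complement of $\xi$ inside the slice, and check that these identities make $J$ a $\bar g$-orthogonal almost complex structure, $J^2 = -\mathrm{id}$. The crux is verifying $\bar\nabla J = 0$: expanding $(\bar\nabla_Z J)W$ over the combinations $Z, W \in \{ \partial_t, \text{slice-tangent} \}$ via the cone formulas, the radial and mixed components vanish by $\bar\nabla\psi = \mathrm{id}$ together with the Killing property of $\xi$, while the slice-tangent/slice-tangent component vanishes precisely by the structure equation --- the forward computation read in reverse. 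Since a parallel orthogonal $J$ is automatically integrable with parallel, hence closed, K\"ahler form $\Omega = \bar g(J\cdot,\cdot)$, the cone is K\"ahler; one checks that $\Omega$ is the exact form $d(t^2\theta)$ prescribed in the definition, with $\theta$ the contact form $\eta$ up to a constant, so $(M,g)$ is Sasakian.

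I expect the main obstacle to be the bookkeeping in the parallel condition $\bar\nabla J = 0$: correctly decomposing it into slice-tangent and $\partial_t$ components across all argument types, and confirming that exactly the slice-tangent/slice-tangent component is the structure equation while every other component is a formal consequence of $\xi$ being a unit Killing field and of $\bar\nabla\psi = \mathrm{id}$. Keeping the sign and normalization conventions mutually consistent --- among the choice $\xi = J\psi$, the definition $\phi = -\nabla\xi$, and the prescribed K\"ahler form --- is the other delicate point and the place where a spurious sign in the structure equation would most easily intrude.
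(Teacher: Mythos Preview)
The paper does not supply its own proof of this proposition; it is quoted from \cite{Boyer01} (Proposition~1.1.2 there) and used as a black box for the subsequent three-dimensional analysis. Your outline is precisely the standard cone-based argument that appears in that reference and in later surveys: identify the Reeb field as $J$ applied to the Euler field, translate the parallelism $\bar\nabla J = 0$ through the warped-product connection formulas, and read off the slice-tangent/slice-tangent component as the structure equation $(\nabla_Y\phi)Z = g(\xi,Z)Y - g(Y,Z)\xi$. The approach is correct and complete in outline; your own caveat about keeping the normalizations consistent with the paper's specific K\"ahler form $t^2\,d\theta + 2t\,dt\wedge\theta$ is well placed.

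One small point worth making explicit in your write-up: when you set $\xi = J\psi$ on the cone you should verify that this field is invariant under the radial flow, i.e.\ $\mathcal{L}_{\partial_t}\xi = 0$, so that it genuinely descends to a vector field on $M$ rather than merely being tangent to each slice. This follows immediately from $\bar\nabla\psi = \mathrm{id}$ and $\bar\nabla J = 0$ via $\mathcal{L}_{\partial_t}\xi = \bar\nabla_{\partial_t}\xi - \bar\nabla_{\xi}\partial_t = \tfrac{1}{t}J\psi - \tfrac{1}{t}\xi = 0$, but it deserves a line since the warped-product formula $\bar\nabla_X Y = \nabla_X Y - t\,g(X,Y)\,\partial_t$ you invoke presupposes that $X$ and $Y$ are lifts from $M$.
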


\noindent In the context of the above proposition, the vector field $\xi$, the dual one form $\eta = g(\cdot, \xi)$, and the $(1,1)$ tensor $\phi$ care called the \textit{Sasakian structure} of the Sasakian manifold $(M,g, \phi, \xi, \eta)$.  From now on, when we refer to a Sasakian manifold $(M,g)$, we also will adopt these definitions for $\eta, \phi, \xi$.  

In dimension three, which is the case of primary interest for this section, an even more useful characterization is true.  This is a well known result, but we provide a proof below for the sake of completion.

\begin{lem} 
    Let $(M,g)$ be a Riemannian manifold of dimension three.  Then $(M,g)$ is Sasakian if and only if there exists a unit Killing field $\xi$ such that all sectional curvatures containing $\xi$ are equal to one. 
\end{lem}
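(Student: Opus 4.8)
The plan is to prove both implications through the curvature characterization supplied by the cited Proposition, reducing ``Sasakian'' to a single identity satisfied by the candidate tensor $\phi(Y) = -\nabla_Y \xi$ and then analyzing that identity using the special structure of curvature in dimension three. I would begin with preliminaries valid for any unit Killing field $\xi$. Since $\xi$ is Killing, $g(\nabla_Y\xi,Z)$ is skew in $Y,Z$, so $\phi := -\nabla\xi$ is a skew-adjoint $(1,1)$-tensor; since $|\xi|$ is constant, $\nabla_\xi\xi=0$ and hence $\phi\xi = 0$. The second covariant derivative of a Killing field is given by curvature, and (tracking the sign convention for $R$) this yields $(\nabla_X\phi)Y = R(\xi,X)Y$ and in particular $\nabla_\xi\phi = 0$. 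Consequently the structure equation of the Proposition, $(\nabla_Y\phi)Z = g(\xi,Z)Y - g(Y,Z)\xi$, is \emph{equivalent} to the purely curvature-theoretic statement $R(\xi,Y)Z = g(\xi,Z)Y - g(Y,Z)\xi$ for all $Y,Z$. Thus the whole problem reduces to: a unit Killing field has all $\xi$-sectional curvatures equal to one if and only if it satisfies this curvature identity.

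The forward direction is then immediate and dimension-free: assuming the structure equation, a short computation using $\nabla_\xi\xi = 0$, $\phi\xi = 0$ and $\nabla_\xi\phi=0$ gives $R(Y,\xi)\xi = -\phi^2 Y = Y - \eta(Y)\xi$, so every plane containing $\xi$ has sectional curvature $g(R(Y,\xi)\xi,Y)=1$ for unit $Y\perp\xi$. For the reverse direction I would first extract the ``$Z=\xi$ case'' of the desired identity: the operator $Y \mapsto R(Y,\xi)\xi$ is symmetric, preserves $\xi^\perp$, and its quadratic form is identically $1$ on the unit sphere of $\xi^\perp$, so it must be the identity there, giving $R(Y,\xi)\xi = Y - \eta(Y)\xi$ and hence, via the preliminary identity $R(Y,\xi)\xi = -\phi^2 Y$, the relation $\phi^2 = -\mathrm{Id} + \eta\otimes\xi$. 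In particular $\phi$ restricts to a rotation by a right angle on the two-plane $\xi^\perp$.

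The main obstacle, and the only place dimension three is used, is upgrading this knowledge of $R(\cdot,\xi)\xi$ (one ``slot'' of the curvature) to the full tensor $R(\cdot,\xi)\cdot$. Writing $\{\xi,e_1,e_2=\phi e_1\}$ for an adapted orthonormal frame, one must show that the mixed components $R(e_i,\xi,e_i,e_j)$ vanish; these are exactly the off-diagonal terms $\Ric(\xi,e_j)$, so the content is that $\xi$ is an eigenvector of the Ricci tensor. I would obtain this by differentiating $\phi^2 = -\mathrm{Id}+\eta\otimes\xi$ and substituting $(\nabla_X\phi)Y = R(\xi,X)Y$ together with $\nabla_X\xi=-\phi X$; evaluating the resulting tensorial identity on the frame $\{\xi,e_1,\phi e_1\}$, the two-dimensionality of $\xi^\perp$ forces the single remaining free component to vanish. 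Alternatively, since the Weyl tensor vanishes in dimension three, the curvature is algebraically determined by the Ricci tensor: knowing $\Ric(\xi,\xi)=2$, that $\xi$ is a Ricci eigenvector, and the values of the $\xi$-sectional curvatures then reproduces $R(\xi,Y)Z = g(\xi,Z)Y - g(Y,Z)\xi$ upon substitution into the three-dimensional curvature-from-Ricci formula.

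With the curvature identity in hand, the equivalence from the first paragraph shows $\phi$ satisfies the structure equation, and the cited Proposition gives that $(M,g)$ is Sasakian, completing the reverse direction. I expect the eigenvector step to be where all the work concentrates; it is worth emphasizing that this step genuinely fails in higher dimensions, where $\xi^\perp$ carries additional independent curvature components that are not controlled by the $\xi$-sectional curvatures alone, which is precisely why the Lemma is stated in dimension three.
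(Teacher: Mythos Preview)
Your proposal is correct and follows essentially the same route as the paper. Both arguments reduce the Sasakian condition, via the identity $(\nabla_Y\phi)Z = R(\xi,Y)Z$, to the curvature statement $R(\xi,Y)Z = g(\xi,Z)Y - g(Y,Z)\xi$; both establish that $\phi$ is a quarter-turn on the two-plane $\xi^\perp$ (the paper via the Hessian formula $|\nabla_Y\xi|^2 = R(\xi,Y,Y,\xi)$, you via $R(Y,\xi)\xi=-\phi^2Y$); and both then verify the single remaining component $R(\xi,e_1,e_1,e_2)=0$. The paper checks this last vanishing by a direct frame computation of $g((\nabla_{E_1}\phi)E_1,E_2)$, while you propose covariantly differentiating $\phi^2=-\mathrm{Id}+\eta\otimes\xi$; these are equivalent manipulations. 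Your framing of this step as ``$\xi$ is a Ricci eigenvector'' is a nice conceptual gloss that the paper does not make explicit. One small wrinkle: your Weyl-tensor ``alternative'' as written \emph{assumes} that $\xi$ is a Ricci eigenvector, so it is not an independent route to that fact but only a way to reconstruct the full identity once the eigenvector property is in hand; since your primary method works, this does not affect the argument.
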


\begin{proof}
    Let $\{Y, Z, \xi\}$ be an orthonormal basis at an arbitrary point $p \in M$. It is shown in Proposition 8.1.3 of \cite{Petersen16} that $(\nabla_Y \phi)(Z)=R(\xi,Y,Z)$ where $\phi$ is defined as in the previous proposition.  By Proposition 8.2.1(2) of \cite{Petersen16}, we have the following:
        \[\mathrm{Hess} \left (\frac{1}{2}|\xi|^2 \right )(Y,Y)=|\nabla_Y \xi|^2-R(\xi, Y, Y, \xi)\]

    \noindent Furthermore, since $|\xi|$ is constant, $\mathrm{Hess} \left (\frac{1}{2}|\xi|^2 \right) = 0$ and so by the above, one has that $|\nabla_Y \xi|^2=|\phi(Y)|^2=R(\xi, Y, Y, \xi) = 1$.  

    To show $(M,g)$ is Sasakian, it suffices to show that $(\nabla_Y \phi)(Z) = g(\xi, Z)Y - g(Y, Z)\xi$ as in the above proposition.  Define a tensor $T$ by $T(W,X,Y,Z) = g(g(X,Y)W-g(W,Y)X,Z)$.  We want to show $R(\xi, Y, Z, W) = T(\xi, Y, Z, W)$ to prove the lemma.  

    We first observe that $T$ and $R$ possess the same symmetries.  Next, note that $\phi$ is a skew-symmetric map and $|\phi(Y)|^2=1$.  In dimension three, this means that there exists an orthonormal basis $\{E_1, E_2, \xi\}$ such that $\phi(E_1) = -E_2$ and $\phi(E_2) = E_1$. Furthermore, we have $\phi(\xi) = \nabla_{\xi}\xi = 0$.

    We then observe that $R(\xi, E_1,E_1, E_1)=T(\xi, E_1, E_1, E_1) =0$. Furthermore, we have that 
    
    \begin{align*}
        R(\xi, E_1,E_1, E_2) &= g((\nabla_{E_1}\phi)(E_1), E_2) 
        \\&= g(\nabla_{E_1}(\phi(E_1)) - \phi(\nabla_{E_1}E_1),E_2)
        \\&= g(-\nabla_{E_1}E_2 - \phi(\nabla_{E_1}E_1),E_2)
        \\&= g(-\nabla_{E_1}E_2, E_2) - g(\phi(\nabla_{E_1}E_1),E_2)
        \\&= -\frac{1}{2}D_{E_1}|E_2|^2 +g(\nabla_{E_1}E_1,\phi(E_2))
        \\&= g(\nabla_{E_1}E_1,\phi(E_2))
        \\&= g(\nabla_{E_1}E_1,E_1)
        \\&= \frac{1}{2}D_{E_1}|E_1|^2=0.
    \end{align*}
    
    \noindent Similarly, we have that
    
    \begin{align*}
        T(\xi, E_1,E_1, E_2) &= g(g(E_1,E_1)\xi-g(\xi,E_1)E_1,E_2) = 0.
    \end{align*}

    Hence, $R(\xi, E_1,E_1, E_2)=T(\xi, E_1, E_1, E_2) =0$.  It follows from these facts that $R(\xi, Y, Z, W) = T(\xi, Y, Z, W)$, and so $(M,g)$ is Sasakian as desired.

    Conversely, suppose that $(M,g)$ is Sasakian.  Then $(M,g)$ admits a unit length Killing field $\xi$.  Since $(M,g)$ is Sasakian, $g(\xi, Y)Z - g(Y, Z)\xi = (\nabla_Y \phi)(Z) = R(\xi, Y, Z)$.  It easily follows that all sectional curvatures involving $\xi$ are equal to one, and so the claim is proved.  
\end{proof}

Using this above lemma, it is relatively straightforward to state and prove the following result, which relates $m$-quasi Einstein metrics in dimension three to Sasakian manifolds.

\begin{prop}
    Let $(M, g, X)$ be a triple satisfying the quasi-Einstein equation where $(M, g)$ is closed, $M$ is dimension three, and $(M,g)$ has constant scalar curvature.  Suppose further that $\lambda + \frac{|X|^2}{m} = 2$. Then $(M, g)$ is Sasakian, where we take $\xi = \frac{X}{|X|}$. 
\end{prop}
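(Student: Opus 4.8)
The plan is to show that $\xi = X/|X|$ is a unit Killing field all of whose sectional curvatures equal one, and then invoke the preceding lemma. The first ingredient is already available: by the cited results in \cite{Cochran2024}, \cite{Bahuaud24}, \cite{CostaFilho24}, the constant-scalar-curvature hypothesis forces $X$ to be a \emph{constant-length Killing field}. Hence $|X|$ is a nonzero constant, so $\xi = X/|X|$ is well-defined, unit length, and Killing (a constant multiple of a Killing field is Killing). This reduces the problem to computing the sectional curvatures $R(\xi, Y, Y, \xi)$ for $Y \perp \xi$ and showing they all equal one.

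To compute these, I would first simplify the quasi-Einstein equation using that $X$ is Killing: $\mathcal{L}_X g = 0$, so (\ref{qe}) becomes $\Ric - \frac{1}{m} X^* \otimes X^* = \lambda g$. Writing $X = |X|\,\xi$, this reads $\Ric = \lambda g + \frac{|X|^2}{m}\,\eta \otimes \eta$ where $\eta = g(\cdot,\xi)$. Feeding in an orthonormal frame $\{E_1, E_2, \xi\}$ (dimension three), this pins down all the Ricci components in terms of $\lambda$ and $|X|^2/m$: for instance $\Ric(\xi,\xi) = \lambda + \frac{|X|^2}{m}$ and $\Ric(E_i, E_i) = \lambda$ for $i=1,2$.

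In dimension three the full Riemann tensor is determined algebraically by the Ricci tensor via the formula $R_{abcd} = g_{ac}R_{bd} - g_{ad}R_{bc} + g_{bd}R_{ac} - g_{bc}R_{ad} - \frac{s}{2}(g_{ac}g_{bd} - g_{ad}g_{bc})$, where $s$ is the scalar curvature. Applying this to the sectional curvatures containing $\xi$, I get $R(\xi, E_i, E_i, \xi) = \Ric(\xi,\xi) + \Ric(E_i, E_i) - \frac{s}{2}$. Since $s = \mathrm{tr}\,\Ric = 3\lambda + \frac{|X|^2}{m}$, this evaluates to $\bigl(\lambda + \tfrac{|X|^2}{m}\bigr) + \lambda - \tfrac{1}{2}\bigl(3\lambda + \tfrac{|X|^2}{m}\bigr) = \frac{1}{2}\bigl(\lambda + \tfrac{|X|^2}{m}\bigr)$. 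The hypothesis $\lambda + \frac{|X|^2}{m} = 2$ then gives exactly $R(\xi, E_i, E_i, \xi) = 1$ for each planar direction, so every sectional curvature containing $\xi$ equals one.

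With a unit Killing field whose every containing sectional curvature is one, the lemma immediately yields that $(M,g)$ is Sasakian, completing the proof. The main obstacle I anticipate is not conceptual but bookkeeping: correctly recording that the cited works deliver the constant-length Killing property (so that $\xi$ is genuinely unit and Killing, which the lemma requires), and being careful with the three-dimensional curvature formula and the trace computation for $s$ so that the constant $2$ comes out precisely. Everything else is a direct substitution.
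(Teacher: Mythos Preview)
Your proof is correct and follows essentially the same strategy as the paper: use the constant-scalar-curvature hypothesis to get that $X$ is a constant-length Killing field, read off the Ricci components from the simplified quasi-Einstein equation, extract the sectional curvatures containing $\xi$ from the Ricci data in dimension three, and invoke the preceding lemma. The only cosmetic difference is that the paper obtains $\mathrm{Sec}(\xi,E_i)=1$ by writing each $\mathrm{Ric}(E_i,E_i)$ as a sum of two sectional curvatures and subtracting, whereas you use the equivalent identity $K(\xi,Y)=\Ric(\xi,\xi)+\Ric(Y,Y)-\tfrac{s}{2}$ directly; both routes encode the same three-dimensional algebra.
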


\begin{proof}
    \noindent Since $(M, g)$ is compact with constant scalar curvature, and since $(M, g, X)$ is quasi-Einstein, $X$ must be a Killing field, as proved in Theorem 4.2 of \cite{Ghosh20}.  Furthermore, this Killing field must have constant norm, as shown in \cite{Bahuaud24}.  Thus, $\xi = \frac{X}{|X|}$ is a unit length Killing field on $(M,g)$.

    \noindent Let $\{\xi, E_0, E_1\}$ be an orthonormal basis at a point $p \in M$.  The quasi-Einstein equations, in conjunction with the definition of sectional curvature in dimension three, yield the following:
        \begin{align}    
            \lambda + \frac{|X|^2}{m} &= \mathrm{Ric}(\xi, \xi) = \mathrm{Sec}(\xi, E_0) + \mathrm{Sec}(\xi, E_1). \\
            \lambda &= \mathrm{Ric}(E_0, E_0) = \mathrm{Sec}(\xi, E_0) + \mathrm{Sec}(E_0, E_1). \\
            \lambda &= \mathrm{Ric}(E_1, E_1) = \mathrm{Sec}(\xi, E_1) + \mathrm{Sec}(E_0, E_1).  \\ \nonumber
        \end{align}

\noindent Subtracting the last two equations from one another imply $\mathrm{Sec}(\xi, E_0) = \mathrm{Sec}(\xi, E_1)$.  By assumption, $\lambda + \frac{|X|^2}{m} = 2$, which imply $\mathrm{Sec}(\xi, E_0) = \mathrm{Sec}(\xi, E_1) = 1$.  Since the orthonormal basis we chose was arbitrary (apart from $\xi$), Lemma 2.3 immediately implies $(M,g)$ is Sasakian.
\end{proof}

We will now observe that given a quasi-Einstein metric $(M, g, X)$, the metric can be rescaled by a constant factor to give another quasi-Einstein metric so that the condition $\lambda + \frac{|X|^2}{m} = 2$ is satisfied.  This will follow from the next two propositions.

\begin{prop}
    Let $(M, g, X)$ be a triple satisfying the quasi-Einstein equation where $(M,g)$ is closed, and $M$ has constant scalar curvature.  Then either:
        \begin{itemize}
            \item[(a)] $dX^* = 0$ and $(M, g)$ splits as a product $(N \times S^1, g_N + d\theta^2)$ where $N$ is Einstein, or 
            \item[(b)] $\lambda + \frac{|X|^2}{m} > 0$.
        \end{itemize}
\end{prop}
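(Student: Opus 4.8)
The plan is to exploit the structural result from \cite{Cochran2024} quoted above: since $(M,g)$ is closed with constant scalar curvature, $X$ is a Killing field of constant norm, and I may assume $X \not\equiv 0$ (the trivial case being Einstein). Consequently $\mathcal{L}_X g = 0$, and the quasi-Einstein equation \eqref{qe} collapses to the algebraic identity
\begin{equation*}
    \Ric = \lambda g + \frac{1}{m} X^* \otimes X^*.
\end{equation*}
Writing $\mu := \lambda + \frac{|X|^2}{m}$ (a constant, since $|X|$ is a positive constant), I would first record the two contractions that matter: testing the displayed identity against $X$ gives $\Ric(X,X) = \lambda|X|^2 + \frac{1}{m}|X|^4 = |X|^2 \mu$, while testing against any $Y \perp X$ gives $\Ric(Y,Y) = \lambda |Y|^2$. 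The whole dichotomy will be driven by the sign of $\mu$.

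The key step is a Bochner argument. For a Killing field on a closed manifold one has the pointwise identity $\tfrac{1}{2}\Delta |X|^2 = |\nabla X|^2 - \Ric(X,X)$; since $|X|^2$ is constant its Laplacian vanishes, so in fact $|\nabla X|^2 = \Ric(X,X) = |X|^2 \mu$ holds pointwise. As $|\nabla X|^2 \ge 0$ and $|X| > 0$, this forces $\mu \ge 0$, with equality exactly when $\nabla X \equiv 0$, i.e.\ when $X$ is parallel. If $\mu > 0$ we are immediately in alternative (b), so it remains to analyze the equality case $\mu = 0$.

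Suppose then $\mu = 0$, so $X$ is parallel. Then $X^*$ is a parallel one-form and in particular $dX^* = 0$, which places us in the regime governed by the classification of constant-scalar-curvature solutions with $dX^* = 0$ recalled in the introduction (cf.\ \cite{Wylie23}, \cite{Bahuaud22}); these force $M$ to split as the Riemannian product $N \times S^1$ with $X$ tangent to the $S^1$ factor. To identify $N$ as Einstein intrinsically, I would note that a parallel field satisfies $R(\,\cdot\,,\,\cdot\,)X = 0$, so the totally geodesic leaves of the parallel distribution $X^\perp$ inherit, via the Gauss equation, the curvature of $M$; computing the intrinsic Ricci curvature of such a leaf $N$ then gives $\Ric^N(Y,Y) = \Ric(Y,Y) = \lambda|Y|^2$ for $Y \in TN$, so $N$ is Einstein with constant $\lambda$. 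The main obstacle is precisely this equality case: passing from ``$X$ is parallel'' to the \emph{global} product $N \times S^1$ is where the real work lies, since a parallel unit field only yields a local de Rham splitting and a global metric product requires controlling the holonomy and the closing-up of the flow of $X$ on the compact manifold — which is why I would lean on the cited classification of the $dX^*=0$ case rather than reprove the global splitting directly.
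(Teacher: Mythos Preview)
Your proposal is correct and follows essentially the same route as the paper: invoke \cite{Cochran2024} to get $X$ Killing of constant length, apply the Bochner identity $\tfrac12\Delta|X|^2=|\nabla X|^2-\Ric(X,X)$ together with $\Ric(X,X)=|X|^2\mu$ to obtain $\mu\ge 0$ with equality forcing $X$ parallel, and then in the equality case deduce $dX^*=0$ and defer the global $N\times S^1$ splitting to \cite{Wylie23}. The only cosmetic difference is that the paper normalizes to $K=X/|X|$ before running Bochner, whereas you apply it directly to $X$; the content is identical.
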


\begin{proof}
    Since $X$ is Killing (again by \cite{Ghosh20}), it satisfies the following Bochner formula (as presented in Proposition 8.2.1 of \cite{Petersen16}):
        \[ \frac{1}{2}\Delta|X|^2 = |\nabla X|^2 - \mathrm{Ric}(X, X). \]
    Since $(M, g, X)$ is quasi-Einstein with constant scalar curvature, $X$ is constant length (as mentioned in the introduction), so clearly $\frac{1}{2}\Delta|X|^2 = 0$.  Furthermore, using the fact that $X$ is Killing and $(M, g, X)$ is quasi-Einstein, $\mathrm{Ric}(X, X) = \lambda |X|^2 + \frac{1}{m}|X|^4$.  We therefore have that $\mathrm{Ric}(K, K) = \lambda + \frac{1}{m}|X|^2$ where $K = \frac{X}{|X|}$.
    Since $K$ is also Killing with constant length, it follows that \[ 0 = \frac{1}{2}\Delta|K|^2 = |\nabla K|^2 - \mathrm{Ric}(K, K) = |\nabla K|^2 - \lambda - \frac{1}{m}|X|^2 \]  and hence,
    \[ \lambda + \frac{1}{m}|X|^2 = |\nabla K|^2 \geq 0.  \] \\

    \noindent If $|\nabla K|^2 \neq 0$, then clearly $\lambda + \frac{|X|^2}{m} > 0$, and we are in case (b) in the proposition.  Hence, we suppose $|\nabla K|^2 = 0$ (so of course $|\nabla X|^2 = 0$ as well).  Knowing $X$ is parallel, this gives us a local splitting of $(M, g)$ as $(N \times S^1, g_N + d\theta^2)$ where $g_N$ is an Einstein metric on $N$.  It also follows from the quasi-Einstein equation that $X = \sqrt{-m\lambda}U$ where $U$ is a unit vector in the $S^1$ direction.  Following an argument in \cite{Wylie23}, it follows that this splitting must in fact be global, by considering the corresponding quasi-Einstein metric in the universal cover $\tilde{M}$ and using the fact that the eigenspaces of the Ricci tensor are preserved under isometries.  Finally, the fact that $X$ is Killing and satisfies $|\nabla X| = 0$ immediately implies $dX^* = 0$, proving the claim.
    
\end{proof}

Alternatively, Proposition 2.5 easily follows from Equation 1.3 of \cite{Bahuaud24}.  This states that for $(M,g,X)$ compact and quasi-Einstein, we have that

    \begin{align} \label{eq:2.5}
        |dX^{*}| = 4|X|^2\left ( \lambda + \frac{|X|^2}{m}\right ).
    \end{align}

\noindent If $|dX^{*}|=0$, $X^{*}$ is closed, and we are in case (a), which is handled in \cite{Wylie23}.  If $|dX^{*}| > 0$, then it follows immediately from (\ref{eq:2.5}) that $\lambda + \frac{|X|^2}{m} > 0$ as claimed.  The author of this paper thanks Alcides de carvalho Júnior for this observation.

\begin{prop}
    Let $(M, g, X)$ be a triple satisfying the quasi-Einstein equation, $\mathrm{dim}(M) = 3$, and $M$ has constant scalar curvature.  Then the triple $(M, \tilde{g}, \tilde{X})$ is quasi-Einstein, where $\tilde{g} = t^2g$, $t > 0$ is a constant, and $\tilde{X} = \frac{X}{t^2}$.  Furthermore, there exists such a choice of $t$ such that $\tilde{\lambda} + \frac{\tilde{X}}{t^2} = 2$ where $\tilde{\lambda}$ is the $\lambda$ associated to $(M, \tilde{g}, \tilde{X})$.  In particular, the triple $(M, \tilde{g}, \tilde{X})$ is Sasakian.  
\end{prop}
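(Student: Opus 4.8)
The plan is to establish two things: that the constant rescaling $\tilde g = t^2 g$ with $\tilde X = X/t^2$ again solves the quasi-Einstein equation for every constant $t>0$, and that $t$ can be chosen so that the scalar $\tilde\lambda + \tfrac1m|\tilde X|^2$ equals $2$, after which the Sasakian criterion of Proposition 2.4 applies directly. The first part is a substitution, and the whole argument hinges on tracking how each term of (\ref{qe}) transforms under this specific rescaling.

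First I would record the transformation rules. The Ricci tensor is unchanged by scaling the metric by a positive constant, so $\widetilde{\Ric} = \Ric$. Because $t$ is constant and $\tilde X = X/t^2$, linearity of the Lie derivative gives $\mathcal L_{\tilde X}\tilde g = \tfrac{1}{t^2}\mathcal L_X(t^2 g) = \mathcal L_X g$. For the dual form, $\tilde X^* = \tilde g(\cdot, \tilde X) = t^2 g(\cdot, X/t^2) = X^*$, so $\tilde X^* \otimes \tilde X^* = X^* \otimes X^*$. The cancellation of the $t^2$ factors in the last two terms is exactly what dictates the choice $\tilde X = X/t^2$ rather than $\tilde X = X$. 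Substituting into the left-hand side of (\ref{qe}) yields $\Ric + \tfrac12 \mathcal L_X g - \tfrac1m X^* \otimes X^* = \lambda g = \tfrac{\lambda}{t^2}\,\tilde g$, so $(M, \tilde g, \tilde X)$ is quasi-Einstein with the same $m$ and with $\tilde\lambda = \lambda/t^2$.

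Next I would compute the scalar that appears in the Sasakian hypothesis. Since $|\tilde X|^2_{\tilde g} = t^2 g(X/t^2, X/t^2) = |X|^2/t^2$, one gets $\tilde\lambda + \tfrac1m|\tilde X|^2 = \tfrac1{t^2}\big(\lambda + \tfrac1m|X|^2\big)$. Solving $\tfrac1{t^2}(\lambda + |X|^2/m) = 2$ for a positive $t$ requires $\lambda + |X|^2/m > 0$, and this is precisely the content of the preceding dichotomy (Proposition 2.5): in case (a) the metric is already the locally homogeneous product $(N\times S^1, g_N + d\theta^2)$ with $dX^* = 0$, which is set aside, while in case (b) we have $\lambda + |X|^2/m > 0$ and may take $t = \sqrt{\tfrac12(\lambda + |X|^2/m)}$, so that $\tilde\lambda + |\tilde X|^2/m = 2$.

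Finally, the rescaled triple $(M, \tilde g, \tilde X)$ is still closed, three-dimensional, and of constant scalar curvature (a constant rescaling preserves constancy of the scalar curvature), and it now satisfies $\tilde\lambda + |\tilde X|^2/m = 2$; Proposition 2.4 then gives that $(M, \tilde g)$ is Sasakian with $\xi = \tilde X/|\tilde X|$. I expect the only genuine obstacle to be the positivity $\lambda + |X|^2/m > 0$ needed to solve for $t$, which is supplied by case (b) of the preceding proposition; the transformation identities above are routine bookkeeping once the scaling weights are pinned down.
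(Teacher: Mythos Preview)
Your proof is correct and follows essentially the same route as the paper: verify that each term of the quasi-Einstein equation is unchanged under the rescaling $\tilde g=t^2g$, $\tilde X=X/t^2$, deduce $\tilde\lambda=\lambda/t^2$ and $|\tilde X|_{\tilde g}^2=|X|^2/t^2$, then solve for $t$ using the positivity supplied by Proposition~2.5 and invoke Proposition~2.4. One minor difference worth noting: the paper first uses the Killing property of $X$ (from the constant scalar curvature hypothesis) to drop $\mathcal L_Xg$ before rescaling, whereas you compute $\mathcal L_{\tilde X}\tilde g=\mathcal L_Xg$ directly; your version therefore shows the rescaling preserves the quasi-Einstein equation in general, not just in the constant scalar curvature case, which is a small bonus. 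Your explicit value $t=\sqrt{\tfrac12(\lambda+|X|^2/m)}$ is the correct one.
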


\begin{proof}
    Let $(M, g, X)$ be quasi-Einstein.  Consider the metric $\tilde{g} = t^2g$ for some $t > 0$ and arbitrary smooth vector fields $Y, Z \in \mathfrak{X}(M)$.  Using the scalar invariance of the $(0, 2)$ Ricci tensor, we have that
        \begin{align*}
            g(\mathrm{Ric}(Y), Z) = \mathrm{Ric}(Y, Z) = \widetilde{\mathrm{Ric}}(Y, Z) = \tilde{g}(\widetilde{\mathrm{Ric}}(Y), Z) = t^2g(\widetilde{\mathrm{Ric}}(Y), Z).
        \end{align*}
    From this, we can therefore conclude that, as a $(1,1)$ tensor, we have that 
        \begin{align*}
            \widetilde{\mathrm{Ric}}(Y) = \frac{1}{t^2}\mathrm{Ric}(Y).
        \end{align*}

    \noindent Since $(M, g, X)$ is quasi-Einstein of constant scalar curvature by assumption, we have that
        \begin{align*}
            \mathrm{Ric} = \lambda g + \frac{X^* \otimes X^*}{m}
        \end{align*}
    where we are using the fact that constant scalar curvature implies $X$ Killing (again by \cite{Ghosh20}).  Defining $\tilde{\lambda} = \frac{\lambda}{t^2}$ and $\tilde{X} = \frac{X}{t^2}$, it is straightforward to verify that
        \begin{align*}
            \widetilde{\mathrm{Ric}} = \tilde{\lambda}\tilde{ g} + \frac{\tilde{X}^* \otimes \tilde{X}^*}{m}
        \end{align*}
    for any $t > 0$ and so the triple $(M, \tilde{g}, \tilde{X})$ is $m$-quasi Einstein.

    \noindent Setting $t = \sqrt{2(\lambda+\frac{|X|^2}{m})^{-1}}$ (which is defined by the previous proposition), we see that $\tilde{\lambda}+\frac{|\tilde{X}|^2}{m} = 2$ and so $(M, \tilde{g}, \tilde{X})$ is Sasakian by Proposition 2.3 as claimed (note here that $|\tilde{X}|$ is being taken with respect to the metric $\tilde{g}$).
\end{proof}

By the previous three propositions, any $m$-quasi Einstein manifold with $dX^* \neq 0$ can be rescaled by an appropriate factor to be Sasakian.  Three dimensional Sasakian manifolds are well understood, have been classified (see \cite{Belgun00}, \cite{Tanno69}, and \cite{Boyer06}).  Therefore, to understand the classification of closed quasi-Einstein manifolds of dimension three of constant scalar curvature, we will refer to this classification.  Specifically, we will refer to Tanno's classification, but we need to define a few terms first. \\

\begin{defn}
    A Sasakian manifold is called \textbf{$\eta$-Einstein} if $\mathrm{Ric} = \lambda g +\nu \eta\otimes \eta$ for constants $\lambda$ and $\nu$.
\end{defn}

\noindent Observe that a three dimensional quasi Einstein metric with constant scalar curvature is $\eta$-Einstein with $\nu = \frac{|X|^2}{m}$ and $\eta = \frac{X^*}{|X|}$.  The relationship between $\eta$-Einstein metrics and quasi-Einstein metrics is also studied in \cite{Ghosh19}.

\begin{defn} 
    A Sasakian manifold $(M, g, \phi, \xi, \eta)$ is said to have \textbf{constant $\phi$-sectional curvature} if $K(Y, \phi Y)$ is constant, where $K(Y, \phi Y)$ denotes the sectional curvature spanned by the vectors $Y$ and $\phi Y$ at a point $p \in M$ and $Y$ is some vector such that $\eta(Y) = 0$.
\end{defn}

\noindent Now, we turn to Proposition 30 in \cite{Boyer06} which connects these two notions in dimension three.

\begin{prop}(\cite{Boyer06})
    A three dimensional Sasakian manifold is $\eta$-Einstein if and only if it has constant $\phi$-sectional curvature.
\end{prop}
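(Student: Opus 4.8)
The plan is to exploit the rigidity of dimension three, where the Ricci tensor is completely determined by the three sectional curvatures of an orthonormal frame, together with Lemma 2.3, which pins down every sectional curvature containing $\xi$. First I would fix a point $p \in M$ and choose an orthonormal frame $\{\xi, E_1, E_2\}$ adapted to the Sasakian structure, with $E_2 = \phi E_1$; since $\phi$ is skew-symmetric with $\phi^2 = -\mathrm{Id}$ on the contact distribution $\mathcal{D} = \ker \eta$ and $\phi \xi = 0$, such a frame always exists and $\{E_1, E_2\}$ spans $\mathcal{D}$. The crucial observation is that in dimension three the plane spanned by $Y$ and $\phi Y$ (for any unit $Y$ with $\eta(Y) = 0$) is exactly $\mathcal{D}$, so the $\phi$-sectional curvature at $p$ is well defined independently of $Y$ and equals $\mathrm{Sec}(E_1, E_2)$; denote it $H$.

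Next I would compute the Ricci tensor in this frame. Using the dimension-three identity $\mathrm{Ric}(e_i, e_j) = \sum_k R(e_i, e_k, e_k, e_j)$ and Lemma 2.3 (which gives $\mathrm{Sec}(\xi, E_1) = \mathrm{Sec}(\xi, E_2) = 1$), the diagonal entries come out to $\mathrm{Ric}(\xi, \xi) = 2$ and $\mathrm{Ric}(E_1, E_1) = \mathrm{Ric}(E_2, E_2) = 1 + H$. For the off-diagonal entries I would invoke the curvature identity $R(\xi, Y, Z, W) = g(Y,Z)g(\xi,W) - g(\xi,Z)g(Y,W)$ established inside the proof of Lemma 2.3: the terms of $\mathrm{Ric}(\xi, E_i)$ and $\mathrm{Ric}(E_1, E_2)$ that do not already vanish by antisymmetry are precisely of this form and evaluate to zero. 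Hence $\mathrm{Ric}$ is diagonal in $\{\xi, E_1, E_2\}$ with the eigenvalues above.

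With this normal form in hand both directions are immediate. If the $\phi$-sectional curvature is a constant $H$, then setting $\lambda = 1 + H$ and $\nu = 1 - H$ (both constant) gives $\mathrm{Ric} = \lambda g + \nu \, \eta \otimes \eta$, since this tensor also has diagonal entries $1 + H$ on $\mathcal{D}$ (where $\eta = 0$) and value $\lambda + \nu = 2$ on $\xi$, and both tensors are diagonal in the frame; thus $M$ is $\eta$-Einstein. Conversely, if $\mathrm{Ric} = \lambda g + \nu \, \eta \otimes \eta$ with $\lambda$ constant, then evaluating on $E_1$ gives $1 + H = \mathrm{Ric}(E_1, E_1) = \lambda$, so $H = \lambda - 1$ is constant; that is, $M$ has constant $\phi$-sectional curvature.

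The main obstacle is not conceptual but bookkeeping: one must verify carefully that the off-diagonal components of $\mathrm{Ric}$ vanish, since the $\eta$-Einstein condition asserts not merely that $\xi$ and $\mathcal{D}$ are Ricci-eigenspaces but that the two eigenvalues on $\mathcal{D}$ coincide. The equality of the eigenvalues on $\mathcal{D}$ is automatic from $\mathrm{Sec}(E_1, E_2) = \mathrm{Sec}(E_2, E_1)$, and the vanishing of the $\xi$-mixing terms reduces entirely to the explicit form of $R(\xi, \cdot, \cdot, \cdot)$ recorded in Lemma 2.3; once these are in place, the equivalence follows by matching coefficients.
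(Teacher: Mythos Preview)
The paper does not supply its own proof of this proposition; it is quoted from \cite{Boyer06} (Proposition 30 there) and used as an established fact. Your argument is a correct, self-contained proof: in dimension three the contact distribution $\mathcal{D}=\ker\eta$ is two-dimensional, so the $\phi$-sectional curvature at each point is the single number $H=\mathrm{Sec}(E_1,E_2)$; Lemma~2.3 forces $\mathrm{Sec}(\xi,E_i)=1$ and (via the identity $R(\xi,Y,Z,W)=T(\xi,Y,Z,W)$ established inside its proof) kills the off-diagonal Ricci components, leaving $\mathrm{Ric}$ diagonal with entries $(2,\,1+H,\,1+H)$; and this matches $\lambda g+\nu\,\eta\otimes\eta$ with $\lambda=1+H$, $\nu=1-H$ precisely when $H$ is constant on $M$.
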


\noindent In other words, using this proposition, we know in the case we are interested in, quasi-Einstein metrics of constant scalar curvature in dimension three are Sasakian with constant $\phi$-sectional curvature.  This provides an appropriate setup for the following result due to Tanno (\cite{Tanno69}), which will have us on the cusp of obtaining a quasi-Einstein classification we desire.  First, we define the notion of $\mathcal{D}$-homothety, which is used in Tanno's classification.

\begin{defn}
    Consider a Sasakian structure $(M,g, \phi, \xi,\eta)$.  We say that the Sasakian structure $(M, g', \phi, \xi', \eta')$ is \textbf{$\mathcal{D}$-homothetic} to $(M,g, \phi, \xi,\eta)$ if there exists a positive constant $t$ such that $g' = tg + (t^2-t)\eta \otimes \eta$, $\xi' = t^{-1}\xi$, $\eta' = t\eta$ and $\phi' = \phi$.
\end{defn}

\begin{prop}(\cite{Tanno69})
    Let $(M, g)$ be a dimension $2n+1$, complete, simply connected Sasakian manifold with constant $\phi$-sectional curvature $H$.Then $(M,g)$ is $\mathcal{D}$-homothetic to a Sasakian structure $(M,g')$ with $H=1, -3, -4$.  Furthermore,
    \begin{itemize}
        \item[(a)] If $H = 1$ $(M,g') \cong S^{2n+1}$ with the round metric.
        \item[(b)] If $H = -3$, then $(M,g') \cong \mathbb{R}^{2n+1}[H]$.
        \item[(c)] If $H = -4$, then $(M,g') \cong (L, CD^n)[H]$.
    \end{itemize}
\end{prop}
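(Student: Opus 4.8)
The plan is to reduce everything to the algebraic structure of the curvature tensor of a Sasakian manifold of constant $\phi$-sectional curvature, and then to combine a normalization under $\mathcal{D}$-homothety with a rigidity argument. At a high level there are three ingredients: a \emph{space-form curvature formula} showing that $H$ determines the full Riemann tensor; the transformation law for $H$ under $\mathcal{D}$-homothety, which collapses the possible values of $H$ into three classes; and a Cartan--Ambrose--Hicks rigidity statement that pins down the isometry type once $R$ is known.

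First I would establish that a Sasakian manifold $(M,g,\phi,\xi,\eta)$ has constant $\phi$-sectional curvature $H$ if and only if its Riemann tensor has the explicit form
\begin{align*}
    R(X,Y)Z &= \frac{H+3}{4}\bigl(g(Y,Z)X - g(X,Z)Y\bigr) \\
    &\quad + \frac{H-1}{4}\bigl(\eta(X)\eta(Z)Y - \eta(Y)\eta(Z)X + g(X,Z)\eta(Y)\xi - g(Y,Z)\eta(X)\xi \\
    &\qquad + g(\phi Y,Z)\phi X - g(\phi X,Z)\phi Y + 2g(\phi X,Y)\phi Z\bigr).
\end{align*}
The reverse implication is an easy direct check (compute $K(Y,\phi Y)$ from the formula), so the substance is the forward direction. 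Starting from the Sasakian identity recorded in the excerpt, $(\nabla_Y\phi)(Z) = g(\xi,Z)Y - g(Y,Z)\xi$, one obtains $R(\xi,Y)Z$, $R(X,Y)\xi$ in closed form and recovers that all sectional curvatures containing $\xi$ equal $1$. Since $\phi$ restricts to an almost-complex structure on the contact distribution $\mathcal{D} = \ker\eta$, one then polarizes the hypothesis that $K(Y,\phi Y)$ is the constant $H$ for every $Y \perp \xi$; this determines the curvature of $\mathcal{D}$ exactly as in the complex-space-form case, and assembling it with the $\xi$-directions gives the displayed expression. (As a consistency check, setting $H=1$ kills the second term and leaves the constant-curvature-one tensor, matching the round sphere.)

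Next I would compute how $H$ transforms under $\mathcal{D}$-homothety. A direct calculation of the Levi-Civita connection of $g' = tg + (t^2-t)\eta\otimes\eta$, with $\xi' = t^{-1}\xi$, $\eta' = t\eta$, $\phi' = \phi$, shows that $(M,g',\phi,\xi',\eta')$ is again Sasakian and that its constant $\phi$-sectional curvature is $H' = \frac{H+3}{t} - 3$. Thus $H+3$ scales by $t^{-1}$: if $H > -3$ one takes $t = \frac{H+3}{4}$ to reach $H' = 1$; if $H = -3$ the value is fixed; and if $H < -3$ one takes $t = -(H+3)$ to reach $H' = -4$. This is precisely the trichotomy $H = 1, -3, -4$ in the statement, so it only remains to identify the three normalized models.

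The final and hardest step is the rigidity: two complete, simply connected Sasakian manifolds with the \emph{same} constant $\phi$-sectional curvature are isomorphic as Sasakian manifolds. The space-form formula shows $R$ is a universal algebraic expression in $g,\phi,\xi,\eta,H$; differentiating the Sasakian identities ($\nabla_Y\xi = -\phi(Y)$ together with the formula for $\nabla_Y\phi$) shows every covariant derivative of $R$ is likewise algebraic in the structure tensors. Hence any linear isometry of tangent spaces intertwining the two Sasakian structures intertwines curvature to all orders, and the Cartan--Ambrose--Hicks theorem promotes it to a global isometry of the (complete, simply connected) manifolds, which then automatically carries $\xi\mapsto\xi$ and preserves the whole structure. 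Exhibiting explicit representatives with $H = 1, -3, -4$ — the round sphere $S^{2n+1}$, the Heisenberg-type model $\mathbb{R}^{2n+1}[H]$, and the bundle $(L,CD^n)[H]$ over complex hyperbolic space — and verifying via the displayed formula that each realizes the prescribed $\phi$-sectional curvature completes the classification. I expect the bookkeeping in deriving the space-form formula and in checking that $\mathcal{D}$-homothety preserves the Sasakian axioms to be the most error-prone parts, while the conceptual crux is the Cartan--Ambrose--Hicks rigidity that upgrades algebraic agreement of curvature into an honest isometry of structures.
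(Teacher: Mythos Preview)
The paper does not prove this proposition at all: it is stated with the citation \cite{Tanno69} and no argument is given, since the result is simply imported from Tanno's original paper and used as a black box in the proof of Theorem~\ref{thm:1}. So there is no ``paper's own proof'' to compare against.

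That said, your outline is essentially the standard route to Tanno's theorem and is sound. The three ingredients you identify --- the explicit Sasakian space-form curvature formula, the transformation law $H' = (H+3)/t - 3$ under $\mathcal{D}$-homothety giving the trichotomy $H>-3$, $H=-3$, $H<-3$, and a Cartan--Ambrose--Hicks style rigidity argument exploiting that $R$ and all its covariant derivatives are algebraic in the structure tensors --- are exactly what the proof requires. One small caution: the rigidity step as you phrase it needs a linear isometry of tangent spaces that intertwines \emph{both} $g$ and the full Sasakian structure $(\phi,\xi,\eta)$, not just $g$; you should make explicit that such a linear map exists between any two $(2n+1)$-dimensional Sasakian tangent spaces (this is where the pointwise homogeneity of the structure enters). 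Otherwise the argument is correct in outline.
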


Here, $S^{2n+1}[H]$ denotes the sphere with the standard round metric of $\phi$-sectional curvature $H$, and $(L, CD^n)$ denotes a line bundle over a homogeneous complex space $CD^n$.  In the three dimensional case, $n=1$, and as noted earlier, $\phi$-sectional curvature is always constant.  Letting $(\tilde{M}, \tilde{g}, \tilde{X})$ be the corresponding quasi-Einstein metric on the universal cover of a quasi-Einstein metric $(M, g, X)$ (showing the quasi-Einstein condition lifts this way to the universal cover is straightforward), and putting everything together, we obtain the following:

\begin{prop}
    Let $(M, g, X)$ be a dimension three quasi-Einstein metric of constant scalar curvature.  Let $(\tilde{M}, \tilde{g}, \tilde{X})$ be the corresponding quasi-Einstein metric in the universal cover of $M$.  Then exactly one of the following holds:
    \begin{itemize}
        \item[(a)] $M$ is spherical, and $M$ is $\mathcal{D}$-homothetic to some Sasakian structure $M'$ such that $\tilde{M'} \cong S^{3}$ with the round metric.
        \item[(b)] $M$ has Thurston geometry $\mathrm{Nil}$.
        \item[(c)] $M$ has Thurston geometry $\widetilde{SL}_2$.
        \item[(d)] $dX^* = 0$ and $(M,g)$ splits globally as $(N \times S^1, g_N+d\theta^2)$ for a constant curvature surface $N$.
    \end{itemize}
\end{prop}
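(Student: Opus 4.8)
The plan is to assemble the pieces already developed in this section and then translate the analytic/geometric classification into Thurston geometry language. By the preceding three propositions, any dimension-three quasi-Einstein metric of constant scalar curvature with $X \neq 0$ either has $dX^* = 0$, landing us immediately in case (d) via Proposition 2.5(a), or satisfies $\lambda + \frac{|X|^2}{m} > 0$, in which case we may rescale by Proposition 2.7 to arrange $\tilde{\lambda} + \frac{|\tilde{X}|^2}{m} = 2$ and conclude that the rescaled triple $(\tilde{M}, \tilde{g}, \tilde{X})$ is Sasakian. Since the quasi-Einstein condition lifts to the universal cover, and since rescaling and passing to the universal cover both preserve the Sasakian structure, I would work on $\tilde{M}$, which is complete and simply connected, and which by Proposition 2.13 (Boyer) has constant $\phi$-sectional curvature $H$.

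Next I would invoke Tanno's classification (Proposition 2.15) with $n = 1$. Up to $\mathcal{D}$-homothety, $\tilde{M}$ is one of the three model spaces according to whether $H = 1$, $H = -3$, or $H = -4$. The task is then to identify each model with its Thurston geometry. For $H = 1$ the model is $S^3$ with the round metric, giving the spherical geometry of case (a); here I would note that the original $M$ is a quotient and is $\mathcal{D}$-homothetic to a Sasakian structure $M'$ whose universal cover is the round $S^3$. For $H = -3$ the model $\mathbb{R}^3[H]$ is the Heisenberg group with its standard Sasakian metric, which is precisely $\mathrm{Nil}$ geometry, yielding case (b). For $H = -4$ the model $(L, CD^1)[H]$ is the line bundle over the hyperbolic plane, i.e. $\widetilde{SL}_2(\mathbb{R})$ geometry, yielding case (c). These identifications are standard in the literature on three-dimensional Sasakian and contact geometry (see Belgun and Boyer-Galicki), so I would cite rather than reprove them.

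The main obstacle I anticipate is the bookkeeping around $\mathcal{D}$-homothety and rescaling: I must be careful that the constant rescaling $\tilde{g} = t^2 g$ used to reach the normalization $\lambda + \frac{|X|^2}{m} = 2$, the $\mathcal{D}$-homothety in Tanno's theorem, and the lift to the universal cover all interact correctly, since a $\mathcal{D}$-homothety changes the metric anisotropically ($g' = tg + (t^2-t)\eta\otimes\eta$) rather than by a constant conformal factor. The key point to verify is that a $\mathcal{D}$-homothety preserves the underlying Thurston geometry type, which it does because it only rescales in the $\xi$ and transverse directions separately while preserving the contact distribution and the sign of $H$; hence the geometry-type conclusion is invariant under all the operations performed. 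I would also remark that the four cases are mutually exclusive because they correspond to distinct values of the sign of $\lambda + \frac{|X|^2}{m}$ together with the trichotomy of $H$, and that case (d) is exactly the degenerate case $|\nabla K|^2 = 0$ excluded from the Sasakian analysis.

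Finally, I would close by noting that in every case $(M,g)$ carries a locally homogeneous geometry, so this proposition immediately implies Theorem \ref{thm:1}; the constant curvature surface $N$ in case (d) makes the product $N \times S^1$ locally homogeneous as well, completing the list.
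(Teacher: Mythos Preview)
Your proposal is correct and follows essentially the same approach as the paper, which simply states the proposition as the result of ``putting everything together'' from the preceding propositions (the dichotomy of Proposition 2.5, the rescaling of Proposition 2.6, the $\eta$-Einstein/constant $\phi$-sectional curvature equivalence, and Tanno's classification). Your only over-reach is the closing remark that the proposition \emph{immediately} implies Theorem \ref{thm:1}: the paper still needs the separate Lemma 2.13 (preservation of homogeneity under $\mathcal{D}$-homothety) to close that argument, so you should not fold that conclusion into this proposition's proof.
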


With the above classification, we are nearly ready to prove Theorem 1.3, showing that all quasi Einstein metrics in three dimensions of constant scalar curvature are locally homogeneous.  Before completing the proof, we show that homogeneous is preserved under $\mathcal{D}$-homotheties.

\begin{lem}
    Let $(M,g,X)$ be compact quasi-Einstein metric of dimension three where $(M,g)$ has constant scalar curvature, $dX^* \neq 0$, and let $(M, g, \phi, \xi, \eta)$ be the Sasakian structure associated to $(M,g,X)$.  Suppose further that $(M,g)$ is homogeneous. Then any $\mathcal{D}$-homothety of this structure $(M, g_t, \phi, \xi_t, \eta_t)$ as defined above is still homogeneous.
\end{lem}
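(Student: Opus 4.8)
The plan is to exploit the fact that the $\mathcal{D}$-homothety $g_t = tg + (t^2-t)\,\eta\otimes\eta$ is assembled entirely from $g$ and the contact form $\eta$, so that any $g$-isometry that also preserves $\eta$ automatically preserves $g_t$. Concretely, if $F$ is an isometry of $(M,g)$ with $F^*\eta = \eta$, then
\[ F^*g_t = t\,F^*g + (t^2-t)\,(F^*\eta)\otimes(F^*\eta) = tg + (t^2-t)\,\eta\otimes\eta = g_t, \]
so $F$ is an isometry of $g_t$. Since $\eta = g(\xi,\cdot)$ and $F$ preserves $g$, the condition $F^*\eta=\eta$ is equivalent to $F_*\xi=\xi$; and because an isometry preserves the Levi-Civita connection, $F_*\xi=\xi$ also forces $F$ to commute with $\phi(Y)=-\nabla_Y\xi$. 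Thus $K:=\{F\in\mathrm{Isom}(M,g):F_*\xi=\xi\}$ consists of automorphisms of the full Sasakian structure, and by the same computation applied to $\xi_t=t^{-1}\xi$ and $\eta_t=t\eta$ every element of $K$ is also an automorphism of the $\mathcal{D}$-homothetic structure $(M,g_t,\phi,\xi_t,\eta_t)$. Hence it suffices to prove that $K$ acts transitively on $M$.

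The heart of the matter, and the step I expect to be the main obstacle, is exactly this transitivity, since it does \emph{not} follow from homogeneity of $(M,g)$ alone: a general $g$-isometry may rotate $\xi$ into a different unit Killing field. Already on the round $S^3$ the group $SO(4)$ moves the Reeb field through the whole family of Hopf fibrations, and only the subgroup fixing the chosen fibration descends to the associated Berger sphere. To extract a transitive $\eta$-preserving subgroup, I would first pass to the identity component $G=\mathrm{Isom}_0(M,g)$, which still acts transitively because $M$ is connected and homogeneous. The Reeb field then corresponds to an element of the Lie algebra $\mathfrak{g}$ of $G$, and $K\cap G$ is precisely the centralizer $Z_G(\xi)$ of the Reeb flow, with Lie algebra $\mathfrak{k}=\{Y\in\mathfrak{g}:[Y,\xi]=0\}$. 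The claim to be proved is therefore that the Killing fields commuting with $\xi$ already span $T_pM$ at every point.

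To establish this I would use the transverse geometry supplied by the $\eta$-Einstein (equivalently constant $\phi$-sectional curvature) condition: the Reeb foliation presents $M$ over a constant-curvature surface $N$, with $\eta$ a connection form and $d\eta$ the transverse K\"ahler form. Since $(M,g)$ is homogeneous, $N$ is a homogeneous constant-curvature surface, so $\mathrm{Isom}_0(N)$ acts transitively; orientation-preserving base isometries preserve $d\eta$ and lift to $\eta$-preserving isometries of $M$, i.e. to elements of $K$, which together with the Reeb flow $\{\varphi_s\}\subseteq K$ (transitive along the leaves) generate a transitive action. More uniformly across all cases, including non-closed Reeb orbits, I would appeal to Proposition 2.9: each of the three model Sasakian space forms $S^3$, $\mathrm{Nil}$, and $\widetilde{SL}_2$ is a Lie group carrying a left-invariant Sasakian structure whose Reeb field is left-invariant, so left translations are $\xi$-preserving Sasakian automorphisms acting transitively, exhibiting a transitive subgroup of $K$ directly. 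Either route shows $K$ acts transitively, and combined with the first paragraph this proves that $(M,g_t)$ is homogeneous, completing the argument.
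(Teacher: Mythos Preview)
Your approach can be made to work, but you are making the problem substantially harder than necessary by overlooking a feature specific to the quasi-Einstein hypothesis. The paper's argument is essentially a one-liner: since $(M,g,X)$ is quasi-Einstein with $X$ Killing, the Ricci tensor reads $\mathrm{Ric} = \lambda g + \tfrac{1}{m}X^*\otimes X^*$, so $\xi = X/|X|$ is an eigenvector with eigenvalue $\lambda + |X|^2/m$ while every vector orthogonal to $\xi$ has eigenvalue $\lambda$. Because $X\neq 0$ these eigenvalues differ, and $\xi$ spans a \emph{simple} eigenspace of $\mathrm{Ric}$. Any isometry $\Phi$ of $(M,g)$ preserves the Ricci eigenspaces, so $d\Phi(\xi) = \pm\xi$, hence $\Phi^*\eta = \pm\eta$, hence $\Phi^*(\eta\otimes\eta) = \eta\otimes\eta$, and therefore $\Phi^*g_t = t\,\Phi^*g + (t^2-t)\,\Phi^*(\eta\otimes\eta) = g_t$. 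Thus \emph{every} $g$-isometry is already a $g_t$-isometry, and transitivity of $\mathrm{Isom}(M,g_t)$ is immediate from that of $\mathrm{Isom}(M,g)$.

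Your concern that ``a general $g$-isometry may rotate $\xi$ into a different unit Killing field,'' illustrated with the round $S^3$, is exactly where the quasi-Einstein hypothesis does work you are not using: the round sphere is Einstein, so the only quasi-Einstein triple it carries has $X=0$, which is excluded here by $dX^*\neq 0$. In the genuinely non-trivial setting the Reeb direction is rigidly pinned down by $\mathrm{Ric}$, and there is no need to isolate a $\xi$-preserving subgroup $K$, to lift transverse isometries, or to invoke the model geometries. Your Route~2 via left translations on the Sasakian space forms is sound for the simply connected models (which is indeed how the lemma is ultimately applied), but it leans on Tanno's classification and does not transparently descend to a compact quotient as in the lemma's stated hypotheses. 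The Ricci-eigenspace argument is self-contained, works uniformly, and in fact yields the stronger conclusion $\mathrm{Isom}(M,g)\subseteq\mathrm{Isom}(M,g_t)$.
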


\begin{proof}
    Let $x, y \in M$ be arbitrary.  Since $(M, g)$ is homogeneous by assumption, there is an isometry $\Phi: (M, g) \to (M,g)$ such that $\Phi(x) = y$.  First, we show that the one form $\eta$ is invariant under $\Phi$ on $(M, g_t)$ (up to a possible sign).  To see this, we recall that $\eta(Y) = g(\xi, Y)$ for an arbitrary vector field $Y$.  We also recall, as mentioned above, that eigenvalues of the Ricci tensor are invariant under isometries.  Since $(M,g,X)$ has constant scalar curvature, we have that $\lambda + \frac{|X|^2}{m}$ is an eigenvalue of the Ricci tensor associated to the eigenvector $X$.  That is, we have that $\mathrm{Ric}(Y, Z) = \mathrm{Ric}(\Phi(Y), \Phi(Z))$ for $Y, Z \in \mathfrak{X}(M)$.  In particular, this means that $\mathrm{Ric}(\xi, \xi) = \mathrm{Ric}(d\Phi(\xi), d\Phi(\xi)) = \lambda + \frac{|X|^2}{m}$ (where, again, $\xi = \frac{X}{|X|}$).  Since the eigenvector $\xi$ has multiplicity one, we conclude that $\xi = \pm d\Phi(\xi)$. \\      
    
    Next, We compute that for arbitrary vectors $v, w \in TM$, one has that
        \begin{align*}
            \\
            g_t(v,w) &= tg(v,w) + (t^2-t)(\eta \otimes \eta)(v,w) \\&= 
            tg(d\Phi(v),d\Phi(w)) + (t^2-t)g(\xi, v)g(\xi,w) \\&= 
            tg(d\Phi(v),d\Phi(w)) + (t^2-t)g(d\Phi(\xi), d\Phi(v))g(d\Phi(\xi),d\Phi(w)) \\&=
            tg(d\Phi(v),d\Phi(w)) + (t^2-t)g(\pm\xi, d\Phi(v))g(\pm\xi,d\Phi(w)) \\&=
            tg(d\Phi(v),d\Phi(w)) + (t^2-t)(\eta \otimes \eta)(d\Phi(v),d\Phi(w)) \\&=
            g_t(d\Phi(v),d\Phi(w)).
        \end{align*}
    In the fourth line of the above, the two instances of $\pm \xi$ must agree in sign.  Either way, the fifth equality is valid since $(-\eta) \otimes (-\eta) = \eta \otimes \eta$.  Hence, $\Phi$ is an isometry with respect to the metric $(M, g_t)$ as well, and so the  $\mathcal{D}$-homothety is still homogeneous as desired.
\end{proof}

Using the above Proposition, we can finish the proof of Theorem 1.3, showing that all closed $m$-quasi Einstein metrics in dimension three of constant scalar curvature must be locally homogeneous.  

\begin{proof}[Proof of Theorem \ref{thm:1}] 
    Let $(M, g, X)$ be a closed, dimension three $m$-quasi Einstein metric of constant scalar curvature.  By the work above, we know that either $dX^*=0$, or $(M, g, X)$ is Sasakian.  In the former case, we know by Proposition 2.5 and \cite{Wylie23} that we must obtain a global splitting $(M, g) = (N \times S^1, g_N +d\theta^2)$ with $N$ Einstein.  As a product of locally homogeneous spaces, $(M, g)$ is clearly locally homogeneous in this case. \\

    Now suppose $dX^* \neq 0$.  Now, by Proposition 2.6, we know that (up to a scale factor) $(M, g, X)$ admits a Sasakian structure $(M, g, \phi, \xi, \eta)$ with $\xi = \frac{X}{|X|}$, and $\eta = \frac{1}{|X|}X^*$.  Passing to the universal cover $(\tilde{M},g, \tilde{X})$, we can refer to Tanno's classification (Proposition 2.11).  We observe that all of the metrics in parts (a), (b) and (c) are homogeneous.  By Lemma 2.13, homogeneity is preserved under $\mathcal{D}$-homotheities.  Therefore, we can conclude that $(\tilde{M},g,\tilde{X})$ is a homogeneous space, and by a result due to Singer in \cite{Singer60}, this tells us that $(M,g,X)$ must be locally homogeneous as claimed.
\end{proof}



\begin{rmk}
Revisiting Lim's classification of three dimensional locally homogeneous quasi-Einstein metrics, we observe that Lim found that nontrivial locally homogeneous quasi-Einstein metrics in dimension three must have one of the following Thurston geometries: $SU(2), Nil, \widetilde{SL_2}, H^2 \times \mathbb{R}$ or $S^2 \times \mathbb{R}$.  In the first case, $SU(2)$, we recall that $SU(2)$ is diffeomorphic to $\mathbb{S}^3$.  The examples Lim found in this case are known as $\textbf{Berger spheres}$.  The $\mathcal{D}$-homotheities referred to in Tanno's classification correspond to shifting the parameter of the Berger sphere. \\

The $H^2 \times \mathbb{R}$ and $S^2 \times \mathbb{R}$ examples in Lim's paper correspond to cases where $dX^* = 0$.  These cases are not Sasakian.
\end{rmk}

\section{Riemannian Submersions in Higher Dimensions}

With the three dimensional case taken care of, the goal of this section will be to study examples of quasi-Einstein metrics of constant scalar curvature in higher dimensions.  To achieve this, we will first suppose $(M,g,X)$ is quasi-Einstein with constant scalar curvature, and that the integral curves generated by $X$ are closed.  We further assume that the action of $S^1$ on $M$ induced by $X$ is a free action, and so there is a Riemannian submersion $\pi: M \to B \cong M/S^1$, where $B$ is a manifold.  Observe that by previous results in \cite{Bahuaud24} mentioned earlier, we know that $|X|$ is constant, and the integral curves of $X$ are geodesics (i.e. the $S^1$ fibers of $\pi$ are totally geodesic).   \\ 

To fix notation, we decompose $TM = \mathcal{V} \oplus \mathcal{H}$ where $\mathcal{V}$ is the vertical distribution of $\pi$, defined to be $\mathrm{ker}(d\pi)$, and $\mathcal{H}$ the horizontal distribution, the orthogonal complement.  Following \cite{Besse87}, we define the tensor $A$ as $A_{E_1} E_2 = \mathcal{H}\nabla_{\mathcal{H}E_1}\mathcal{V}E_2 + \mathcal{V}\nabla_{\mathcal{H}E_1}\mathcal{H}E_2$ for $E_1, E_2 \in \mathfrak{X}(M)$ where we use $\mathcal{V}$ and $\mathcal{H}$ to denote projections onto the vertical and horizontal distributions of $\pi$ respectively.  Using formulae due to O'Neill, one can compute the Riemann curvature tensor of the total space $M$ of a Riemannian submersion in terms of the curvature of the base $B$, the tensor $A$ defined above, and a tensor $T$ describing the curvature of the fibers of $\pi$ (but note here that $T \equiv 0$ since our fibers are totally geodesic).  From these formulae and the quasi-Einstein equation, one can easily obtain the below, which is an easy modification of Proposition 9.61 in \cite{Besse87}.  First, we will recall the notion of a \textit{Yang-Mills connection}.

\begin{defn}
    Let $\pi: (M,g) \to B$ be a Riemannian submersion, and let $\mathcal{H}$ be the horizontal distribution.  We say $\mathcal{H}$ is a \textbf{Yang-Mills connection} if
        \[ g(\check{\delta}A(Y), U)-g(A_Y, T_U) = 0. \]
    \noindent where $\displaystyle \check{\delta}A = -\sum_{i}(\nabla_{Y_i}A)_{Y_i}$ and $\{Y_i\}$ is an orthonormal basis for the horizontal distribution at a point.
\end{defn}

\noindent Observe that in the case we are interested in, namely the case where the fibers are totally geodesic $S^1$ fibers, the Yang-Mills condition is equivalent to the condition that $\check{\delta}A = 0$.

\noindent Next, we state Proposition 9.61 in \cite{Besse87}.

\begin{prop}[\cite{Besse87}, Prop. 9.61]
    Let $\pi: (M,g) \to (B,\check{g})$ be a Riemannian submersion with totally geodesic fibers.  Then $(M,g)$ is Einstein if and only if the following conditions are satisfied:
        \begin{align*}
            &\mathcal{H} \textrm{ is a Yang-Mills connection}\\
            &\widehat{Ric}(U, V) + g(AU, AV) = \lambda g(U,V)  \\
            &\widecheck{Ric}(Y, Z) - 2g(A_Y, A_Z) = \lambda g(Y, Z)
        \end{align*}
    where $g(A_Y, A_Z) = g(A_Y N, A_Z N)$, and $\widecheck{Ric}$ denotes the Ricci tensor on $B$,  $\widecheck{Ric}$ denotes the Ricci tensor on the fibers of $\pi$, and $g(AU,AV) = \sum_i g(A_{Y_i}U, A_{Y_i}V)$ where $\{Y_i\}$ is an orthonormal basis for the horizontal distribution at a point.
\end{prop}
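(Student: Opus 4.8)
The plan is to obtain all three conditions at once from O'Neill's curvature formulas, which express the Ricci tensor of the total space $(M,g)$ of a Riemannian submersion in terms of the intrinsic Ricci tensors of the fibers and of the base, together with the tensors $A$ and $T$. Since the fibers of $\pi$ are assumed totally geodesic, $T$ and its covariant derivatives vanish identically and the fibers' mean curvature is zero; substituting $T \equiv 0$ into O'Neill's formulas is precisely the simplification that produces the clean statement. So the first step is to record the specialized formulas for $\mathrm{Ric}(U,V)$, $\mathrm{Ric}(Y,Z)$, and $\mathrm{Ric}(Y,U)$ in the totally geodesic case, where throughout $U,V$ denote vertical and $Y,Z$ horizontal vector fields.

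The key structural point is that, because $\mathcal{V}$ and $\mathcal{H}$ are $g$-orthogonal, the Einstein equation $\mathrm{Ric} = \lambda g$ decouples into three independent components according to the vertical/horizontal type of its two arguments. Evaluating on two vertical vectors yields $\widehat{Ric}(U,V) + g(AU,AV) = \lambda g(U,V)$, the second displayed condition. Evaluating on two horizontal vectors yields $\widecheck{Ric}(Y,Z) - 2g(A_Y,A_Z) = \lambda g(Y,Z)$, the third, where the pullback of the base Ricci tensor appears because totally geodesic fibers contribute no extra term beyond the $A$-correction. Finally, for the mixed component one has $\mathrm{Ric}(Y,U) = \lambda g(Y,U) = 0$, since horizontal and vertical vectors are orthogonal; with $T \equiv 0$, O'Neill's mixed formula collapses to a constant multiple of $g(\check{\delta}A(Y),U)$, so its vanishing for all $Y$ and $U$ is exactly the Yang-Mills condition $\check{\delta}A = 0$ recorded in the definition above. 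The converse direction is then immediate: assuming the three conditions, one decomposes arbitrary $E_1, E_2 \in \mathfrak{X}(M)$ into vertical and horizontal parts and reassembles the same O'Neill formulas to recover $\mathrm{Ric}(E_1,E_2) = \lambda g(E_1,E_2)$.

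I expect the only real difficulty to be bookkeeping rather than conceptual. The delicate point is to recall O'Neill's mixed-term formula correctly and to verify that, once $T \equiv 0$ (hence $T_U = 0$), the Yang-Mills expression $g(\check{\delta}A(Y),U) - g(A_Y, T_U)$ reduces to $g(\check{\delta}A(Y),U)$, matching the mixed Ricci component on the nose. One must also confirm that the $A$-contractions $g(AU,AV) = \sum_i g(A_{Y_i}U, A_{Y_i}V)$ and $g(A_Y,A_Z)$ enter with the stated coefficients $+1$ and $-2$ and with consistent sign conventions. I would pin these down by specializing Besse's Theorem 9.36 to the totally geodesic case and then simply reading off the three type-components of $\mathrm{Ric} - \lambda g$, matching each to one of the three asserted conditions.
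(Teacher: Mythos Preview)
Your proposal is correct and is essentially the standard argument. Note, however, that the paper does not actually supply a proof of this proposition: it is quoted verbatim from \cite{Besse87} (Proposition~9.61) and stated without proof. The paper's own contribution is the quasi-Einstein analogue (the next proposition), whose short proof follows exactly the strategy you describe---decompose $\mathrm{Ric}$ into its vertical, horizontal, and mixed components via O'Neill's formulas with $T\equiv 0$, identify the mixed vanishing with the Yang-Mills condition $\check{\delta}A=0$, and read off the remaining two equations. So your plan matches both the spirit of Besse's original and the paper's treatment of the parallel statement.
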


\noindent Now, we state and prove the analogous result for quasi-Einstein metrics in the case where the fibers are $S^1$.

\begin{prop}
    Let $(M,g)$ have constant scalar curvature, and let $\pi: (M,g) \to (B,\check{g})$ be a Riemannian submersion with totally geodesic, $S^1$ fibers.  Furthermore, let $X$ be a smooth vector field tangent to the fibers of $\pi$.  The triple $(M,g,X)$ is quasi-Einstein if and only if the following conditions are satisfied:
        \begin{align}
            &\mathcal{H} \textrm{ is a Yang-Mills connection}\\
            &|A|^2 = \lambda + \frac{|X|^2}{m} \label{eq:3.2}  \\
            &\widecheck{Ric}(Y, Z) - 2g(A_Y, A_Z) = \lambda g(Y, Z)
        \end{align}
    where $|A|^2 = \sum g(A_{Y_i}, A_{Y_i})$ and $\{Y_i\}$ is an orthonormal basis of $\mathcal{H}$, and $ g(A_{Y}, A_{Z})$ is defined in the previous proposition.
\end{prop}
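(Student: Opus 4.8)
The plan is to reduce the tensorial quasi-Einstein equation to three block identities by decomposing it along $TM = \mathcal{V}\oplus\mathcal{H}$ and applying O'Neill's formulas for a submersion with totally geodesic fibers. Since the fibers are one-dimensional, let $U$ denote the unit vertical field, so that $X = fU$ with $f = |X|$, and recall that for $S^1$ fibers the fiber Ricci tensor $\widehat{Ric}$ vanishes and the tensor $T$ is identically zero. The first and most delicate step is to observe that $X$ is Killing, so the Lie-derivative term in (\ref{qe}) drops out. Indeed, $|X| = f$ is constant (this is exactly the content of the results from \cite{Bahuaud24} and \cite{Cochran2024} invoked at the start of the section), and a direct computation using $\nabla_U U = 0$ together with the skew-symmetry $g(A_Y U, Z) = -g(A_Y Z, U)$ of the O'Neill tensor shows that $(\mathcal{L}_X g)(U,U)$, $(\mathcal{L}_X g)(U,Y)$ and $(\mathcal{L}_X g)(Y,Z)$ all vanish for horizontal $Y, Z$. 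With $\mathcal{L}_X g = 0$, the quasi-Einstein equation becomes simply
\[ \Ric = \lambda g + \tfrac{1}{m}\, X^* \otimes X^*, \]
and since $X$ is vertical the correction term $\tfrac{1}{m}X^*\otimes X^*$ is supported entirely on the vertical-vertical block, where it equals $\tfrac{1}{m}|X|^2$.

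Next I would test this identity against the three types of pairs. On the vertical-vertical block, O'Neill gives $\Ric(U,U) = \widehat{Ric}(U,U) + (AU,AU)$; using $\widehat{Ric} = 0$ and the identification $(AU,AU) = \sum_i g(A_{Y_i}U, A_{Y_i}U) = |A|^2$ (which is precisely the quantity defined in the statement, taking $N = U$), this block reads $|A|^2 = \lambda + \tfrac{|X|^2}{m}$, the second stated condition. On the horizontal-horizontal block, O'Neill gives $\Ric(Y,Z) = \widecheck{Ric}(Y,Z) - 2g(A_Y, A_Z)$, and since the right-hand side of the reduced equation contributes only $\lambda g(Y,Z)$ there, this block reads exactly $\widecheck{Ric}(Y,Z) - 2g(A_Y, A_Z) = \lambda g(Y,Z)$, the third stated condition. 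These two blocks are thus immediate modifications of the Einstein computation in the preceding proposition (Besse's 9.61), the only change being the extra $\tfrac{1}{m}|X|^2$ appearing in the vertical-vertical equation.

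Finally, on the mixed vertical-horizontal block the right-hand side contributes nothing, so the equation reduces to $\Ric(U,Y) = 0$ for all horizontal $Y$. Here I would quote O'Neill's mixed-Ricci formula for totally geodesic fibers, which expresses $\Ric(U,Y)$ through $\check{\delta}A$; comparing with the definition of a Yang-Mills connection given above (and using $T \equiv 0$, so that condition collapses to $\check{\delta}A = 0$) shows that the vanishing of $\Ric(U,Y)$ is equivalent to $\mathcal{H}$ being Yang-Mills, the first stated condition. Since the three blocks are mutually orthogonal and exhaust $\mathrm{Sym}^2(T^*M)$, the single equation $\Ric = \lambda g + \tfrac{1}{m}X^*\otimes X^*$ holds if and only if all three block identities hold simultaneously, giving the claimed equivalence. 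I expect the main obstacle to be the bookkeeping in the first step: verifying that $X$ is genuinely Killing so that $\mathcal{L}_X g$ disappears, and correctly transcribing O'Neill's mixed-term formula so that it aligns with the Yang-Mills condition; once these are in place the remaining two blocks follow directly.
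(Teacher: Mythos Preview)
Your proposal is correct and follows essentially the same approach as the paper: decompose the quasi-Einstein equation along $\mathcal{V}\oplus\mathcal{H}$ and read off the three conditions via O'Neill's formulas for a submersion with totally geodesic one-dimensional fibers. You are in fact more careful than the paper in explicitly verifying that $\mathcal{L}_X g = 0$ (via the section's standing hypotheses that make $|X|$ constant), a step the paper's brief argument leaves implicit.
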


\begin{proof}
     $\mathcal{H}$ being a Yang-Mills connection means $\check{\delta} A = 0$ which equivalent to $\widecheck{Ric}(Y, U) = 0$ for $U \in \mathcal{V}$ and $Y \in \mathcal{H}$.  The other two equations come from plugging in horizontal and vertical vectors into the quasi-Einstein conditions where the Ricci curvatures on the left hand sides are written in terms of $A$ and the Ricci tensor on the base.  Observe, for instance, that the term $\widehat{Ric}$ vanishes in this case since our fibers here are one-dimensional.  Observe further that to obtain (\ref{eq:3.2}), we use the fact that $X$ is Killing, which follows from the results mentioned in the introduction. 
\end{proof}

\noindent From the above, it is not too difficult to obtain the following useful lemma:

\begin{lem}
    Let $(M,g)$ have constant scalar curvature, $\pi: (M,g) \to (B,\check{g})$ be a Riemannian submersion with totally geodesic, $S^1$ fibers and suppose $X$ is a smooth vector field tangent to the fibers of $\pi$.  If $(M,g,X)$ is quasi-Einstein, then $|A|$ is constant and $B$ has constant scalar curvature.
\end{lem}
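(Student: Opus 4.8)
The plan is to reduce the whole statement to the single assertion that $|X|$ is constant. By the previous proposition, the quasi-Einstein condition already supplies three identities: the horizontal distribution $\mathcal{H}$ is Yang-Mills, one has $|A|^2 = \lambda + \frac{|X|^2}{m}$, and $\widecheck{Ric}(Y, Z) - 2g(A_Y, A_Z) = \lambda g(Y, Z)$ on horizontal vectors. Since $\lambda$ and $m$ are constants, the middle identity shows that $|A|$ is constant exactly when $|X|$ is constant, so it suffices to establish that $|X|$ is constant on $M$.

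To prove this, I would extract information from the mixed (horizontal–vertical) component of the quasi-Einstein equation, which is \emph{not} among the three identities recorded above. Write $X = |X|\,U$, where $U$ is the unit vertical field; since the fibers are totally geodesic one-dimensional geodesics, $\nabla_U U = 0$. Recall from the proof of the previous proposition that the Yang-Mills condition $\check{\delta}A = 0$ is equivalent to the vanishing of the mixed Ricci curvature of $M$, namely $\Ric(U, Y) = 0$ for horizontal $Y$. Feeding a vertical $U$ and a horizontal $Y$ into the quasi-Einstein equation and using that $X$ is vertical (so $X^*(Y) = 0$) together with $\Ric(U, Y) = 0$, I obtain $(\mathcal{L}_X g)(U, Y) = 0$. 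A short computation of $(\mathcal{L}_X g)(U, Y) = g(\nabla_U X, Y) + g(\nabla_Y X, U)$, using $X = |X|\,U$, the relation $\nabla_U U = 0$, and the fact that $|X|$ is constant along the fibers, collapses this to $Y(|X|) = 0$. Thus the horizontal derivative of $|X|$ vanishes; combined with fiberwise constancy this forces $|X|$ to be constant on all of $M$, and hence $|A|$ is constant by the middle identity.

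Finally, to deduce that $B$ has constant scalar curvature, I would trace the third identity over an orthonormal horizontal frame $\{Y_i\}$. Summing $\widecheck{Ric}(Y_i, Y_i) - 2g(A_{Y_i}, A_{Y_i}) = \lambda\,g(Y_i, Y_i)$ yields $\widecheck{\mathrm{scal}} = \lambda \dim B + 2|A|^2$, where $\widecheck{\mathrm{scal}}$ denotes the pullback of the scalar curvature of $B$. Since $|A|$ is constant and $\lambda$ and $\dim B$ are constants, the right-hand side is constant; as the scalar curvature of $B$ is automatically constant along fibers (being a pullback), it descends to a genuinely constant function on $B$.

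The hard part is the second step: the three identities of the previous proposition do not by themselves control the variation of $|X|$, so the argument genuinely needs the off-diagonal component of the quasi-Einstein tensor together with the Yang-Mills/vanishing-mixed-Ricci fact. The one technical point to verify with care is the Lie-derivative computation $(\mathcal{L}_X g)(U, Y) = Y(|X|)$, which rests on the fibers being totally geodesic ($\nabla_U U = 0$) and on $|X|$ being constant along the one-dimensional fibers; once this is secured, the remaining tracing steps are routine.
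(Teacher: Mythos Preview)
Your argument for the constancy of $|X|$ is circular. You invoke the Yang--Mills condition (3.1) from the previous proposition to conclude $\Ric(U,Y)=0$, and then combine this with the mixed quasi-Einstein equation to deduce $(\mathcal{L}_X g)(U,Y)=0$. But look at how (3.1) is actually obtained from the quasi-Einstein equation in that proposition: one plugs the pair $U,Y$ into
\[
\Ric + \tfrac{1}{2}\mathcal{L}_X g - \tfrac{1}{m}X^*\otimes X^* = \lambda g,
\]
which yields the \emph{single} relation $\Ric(U,Y)+\tfrac{1}{2}(\mathcal{L}_X g)(U,Y)=0$. This equation cannot by itself produce both $\Ric(U,Y)=0$ and $(\mathcal{L}_X g)(U,Y)=0$ separately; the proposition's statement that $\mathcal{H}$ is Yang--Mills already presumes the $\mathcal{L}_X g$ term vanishes. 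So you are using $(\mathcal{L}_X g)(U,Y)=0$ to prove $(\mathcal{L}_X g)(U,Y)=0$. The same issue bites the fiberwise-constancy step you assert without proof: the vertical--vertical quasi-Einstein equation reads $|A|^2 + U(|X|) - |X|^2/m = \lambda$, and identity (3.2) follows only after one already knows $U(|X|)=0$.

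The paper sidesteps all of this by simply citing the results of Bahuaud--Gunasekaran--Kunduri--Woolgar and the author's earlier paper (valid under the section's standing hypotheses of compactness and constant scalar curvature) to get that $X$ is Killing with constant length outright. Once $|X|$ is constant, (3.2) gives $|A|$ constant and tracing (3.3) gives $\check{s}$ constant; this last part of your write-up matches the paper exactly. What is missing from your attempt is precisely that external input: without it, the three identities of the previous proposition are not available in the form you need.
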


\begin{proof}
    By results mentioned earlier, we know that $|X|$ is constant, hence $\lambda + \frac{|X|^2}{m}$ is constant.  By (3.2), it then immediately follows that $|A|$ is constant as well.\\

    To show $B$ has constant scalar curvature, consider the trace of (3.3), which is
        \[\check{s} - 2|A|^2 = \lambda n. \]
    Since we know $|A|$ is constant, it clearly follows that $\check{s}$ is constant as well.
\end{proof}

With the above established, we are now interested in principal $S^1$ bundles $p: P \to B$ over a compact Einstein base $B$.  Starting with a metric $\check{g}$ on $B$, our goal will be to study quasi-Einstein metrics on $P$ where $p: P \to B$ is a Riemannian submersion with totally geodesic fibers.  We start by recalling some terms.

\begin{defn}
    A \textbf{principal G-bundle} $p: P \to B$ is a locally trivial fiber bundle which is the quotient of a free action of the Lie group $G$ on $P$.  A \textbf{principal connection} is a one form $\theta$ on $P$ which takes values in the Lie algebra $\mathfrak{g}$ and $\theta$ sends a vector field $V \in P$ to the vector field in $\mathfrak{g}$ such that
        \begin{itemize}
            \item[(a)] $\mathrm{Ad}_g(\theta \circ g) = \theta$ for all $g\in G$ and
            \item[(b)] $\theta(\hat{V}) = V$ where $\hat{V}$ is the vector field generating the one-parameter family of isomorphisms generated by $\mathrm{Exp}(tV)$ of $G$.
        \end{itemize}
    Furthermore, we define the \textbf{curvature} of $\theta$ to be $\Omega(Y, Z) = d\theta(Y,Z) + \theta \wedge \theta$.  
\end{defn}

\begin{rmk}
    In the case where $G = S^1$, observe that $\theta \wedge \theta = 0$ and so the curvature form $\Omega$ associated to $\theta$ is the pullback of a closed 2-form on $B$.  Hereinafter, we define $\omega = d\theta$.
\end{rmk}

The following result, proved by \cite{Vilms70} and also presented as Theorem 9.59 in \cite{Besse87}, connects principal circle bundles with Riemannian submersions with totally geodesic fibers.

\begin{thm}[\cite{Vilms70}] \label{Vilms}
    Let $G$ be a Lie group and let $p: P \to B$ be a principal $G$ bundle.  Given a Riemannian metric on $\check{g}$ on $B$ and a principal connection on $\theta$ on $P$, there exists a unique metric $g$ on $P$ such that $p$ is a Riemannian submersion with totally geodesic fibers.
\end{thm}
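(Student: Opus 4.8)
The plan is to construct $g$ directly from the splitting of $TP$ induced by the connection and then verify the two defining properties, reserving the totally geodesic condition as the one substantive point.

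First I would use $\theta$ to decompose $TP = \mathcal{V} \oplus \mathcal{H}$, where $\mathcal{V} = \ker(dp)$ is the vertical distribution tangent to the fibers and $\mathcal{H} = \ker\theta$ is the horizontal distribution determined by the connection. Fixing a bi-invariant (equivalently, $\mathrm{Ad}$-invariant) inner product $\langle\cdot,\cdot\rangle$ on $\mathfrak{g}$ as part of the data---which in the case $G = S^1$ amounts to fixing the fiber length and is automatically $\mathrm{Ad}$-invariant since $\mathfrak{g}$ is abelian---I would define $g$ by declaring $\mathcal{V} \perp \mathcal{H}$, setting $g(Y,Z) = \check{g}(dp\,Y, dp\,Z)$ for horizontal $Y,Z$, and setting $g(\sigma(A),\sigma(B)) = \langle A,B\rangle$ on the fundamental vector fields $\sigma(A),\sigma(B)$ generated by $A, B \in \mathfrak{g}$. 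Since $\sigma$ trivializes $\mathcal{V}$ globally and $dp|_{\mathcal{H}}$ is a fiberwise isomorphism onto $TB$, this yields a smooth Riemannian metric; by construction $dp$ restricts to a linear isometry on each horizontal space, so $p$ is a Riemannian submersion, and $\mathrm{Ad}$-invariance makes the $G$-action isometric with fibers isometric to $(G,\langle\cdot,\cdot\rangle)$.

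The key step is to verify that the fibers are totally geodesic, i.e. that the O'Neill tensor vanishes, which amounts to showing $\mathcal{H}\nabla_U V = 0$ for all vertical $U, V$. I would test against a horizontal lift $W$ (a basic field) using the Koszul formula
\[ 2g(\nabla_U V, W) = U g(V,W) + V g(W,U) - W g(U,V) + g([U,V], W) - g([V,W], U) + g([W,U], V). \]
Taking $U = \sigma(A)$ and $V = \sigma(B)$, the first two terms vanish because $\mathcal{V} \perp \mathcal{H}$; the term $W g(U,V)$ vanishes because $g(\sigma(A),\sigma(B)) = \langle A,B\rangle$ is constant on $P$; the bracket $[U,V]$ is again a fundamental (hence vertical) field, so $g([U,V],W) = 0$; and $[V,W] = [W,U] = 0$ because horizontal lifts of vector fields on $B$ are $G$-invariant (the distribution $\ker\theta$ is preserved by the $G$-action) and therefore commute with the fundamental fields. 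Thus $g(\nabla_U V, W) = 0$ for every horizontal $W$, giving $\mathcal{H}\nabla_U V = 0$, so the fibers are totally geodesic. I expect this verification---specifically justifying why each derivative and bracket vanishes, together with the role of $\mathrm{Ad}$-invariance in making $g$ a $G$-invariant metric whose fibers have the prescribed isometry type---to be the main obstacle; the remaining points are formal.

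Finally, for uniqueness I would argue that any metric making $p$ a Riemannian submersion with horizontal distribution $\mathcal{H} = \ker\theta$ and totally geodesic fibers isometric to the prescribed $(G,\langle\cdot,\cdot\rangle)$ must coincide with the construction: the Riemannian submersion condition forces $\mathcal{H}\perp\mathcal{V}$ and pins down $g$ on $\mathcal{H}$ as the pullback of $\check{g}$, while the prescribed isometry type of the fibers pins down $g$ on $\mathcal{V}$. Hence $g$ is the unique such metric.
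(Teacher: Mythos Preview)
The paper does not supply its own proof of this theorem; it is quoted as a known result from \cite{Vilms70} (and Theorem 9.59 of \cite{Besse87}) and used as a black box. Your argument is the standard Kaluza--Klein construction and is correct: the Koszul computation showing $\mathcal{H}\nabla_{\sigma(A)}\sigma(B)=0$ is exactly the classical verification, and since the second fundamental form of the fibers is tensorial, checking it on fundamental vector fields suffices. One minor point worth flagging is that the theorem as stated in the paper omits the choice of fiber metric (an $\mathrm{Ad}$-invariant inner product on $\mathfrak{g}$), without which uniqueness fails; you correctly supply this as part of the data, and in the $S^1$ case relevant to the paper it amounts to fixing the fiber length, which the paper does elsewhere by normalizing to $2\pi$.
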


As observed in Equation 9.65 in \cite{Besse87}, the tensor $A$ and the 2-form $\omega$ (as defined in the remark above) are related by

\begin{equation}
    A_Y Z = -\frac{1}{2}\omega(\check{Y}, \check{Z})\check{U}
\end{equation}

\noindent under the assumption that the fibers of the submersion have length $2\pi$, and where checks refer to the pullbacks of vector fields to $P$ and $\check{U}$ is the unit vector field on $P$ generating the $S^1$ action.  Observe that the choice of having the fibers have length $2\pi$ is equivalent to a choice of a principle connection on $P$.  It then follows by Theorem \ref{Vilms} that there exists a unique metric $g$ on $P$ so that $p$ is a Riemannian submersion with totally geodesic fibers.  Using equations (3.1)-(3.3) along with the above allow us to establish the following:

\begin{prop}
    Let $p: P \to B$ be a principal $S^1$ bundle with $B$ compact and a Riemannian submersion with fibers length $2\pi$, and as above, let $\omega$ be the pullback on $B$ of the curvature form of the principal connection $\theta$.  Let $g$ be the unique metric on $P$ such that $p$ is a Riemannian submersion with totally geodesic fibers.  Suppose $(P,g)$ is of constant scalar curvature.  Suppose further that $X$ is a smooth vector field on $M$ such that $X$ is tangent to the fibers of $\pi$ (i.e. $X \in \mathcal{V}$).  Then $(P,g,X)$ is quasi-Einstein if and only if
        \begin{itemize}
            \item[(a)] $\omega$ is coclosed (it follows that $\omega$ is closed and has constant norm).
            \item[(b)] $\widecheck{Ric}(Y, Z) - \frac{1}{2}g(\omega_y, \omega_z) = (\frac{|\omega|^2}{4}-\frac{|X|^2}{m})g(Y, Z)$ where
            $g(\omega_y, \omega_z) = \sum_i \omega(Y,X_i)\omega(Z,X_i)$ for an orthonormal basis $\{X_i\}$.
        \end{itemize}
\end{prop}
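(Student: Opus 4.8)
The plan is to translate the abstract characterization in Proposition 3.5 into the explicit language of the curvature form $\omega$, using the relation $A_Y Z = -\frac{1}{2}\omega(\check{Y}, \check{Z})\check{U}$ from equation (3.4). Since Proposition 3.5 already tells us that $(P,g,X)$ is quasi-Einstein if and only if the three conditions (3.1)--(3.3) hold, the task reduces to rewriting each of those three conditions in terms of $\omega$ and showing the resulting system is equivalent to conditions (a) and (b) in the statement. The main work is therefore bookkeeping with the O'Neill tensor $A$ and the single vertical direction $\check{U}$.

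\medskip

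First I would unpack condition (3.2), $|A|^2 = \lambda + \frac{|X|^2}{m}$. Using $A_{Y_i} Z = -\frac{1}{2}\omega(Y_i, Z)\check{U}$ for an orthonormal horizontal frame $\{Y_i\}$, one computes $g(A_{Y_i}, A_{Y_i}) = g(A_{Y_i}\check{U}, A_{Y_i}\check{U})$; since $A$ interchanges horizontal and vertical parts and is skew, $A_{Y_i}\check{U}$ is horizontal, and a standard identity gives $|A|^2 = \frac{1}{4}|\omega|^2$. Hence (3.2) becomes $\frac{|\omega|^2}{4} = \lambda + \frac{|X|^2}{m}$, which determines $\lambda = \frac{|\omega|^2}{4} - \frac{|X|^2}{m}$. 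Next I would substitute this value of $\lambda$ into condition (3.3), $\widecheck{Ric}(Y,Z) - 2g(A_Y, A_Z) = \lambda g(Y,Z)$. Since $g(A_Y, A_Z) = g(A_Y\check{U}, A_Z\check{U}) = \frac{1}{4}\sum_i \omega(Y,Y_i)\omega(Z,Y_i) = \frac{1}{4}g(\omega_y, \omega_z)$ in the notation of the statement, the term $2g(A_Y,A_Z)$ becomes $\frac{1}{2}g(\omega_y,\omega_z)$, and (3.3) rearranges exactly into condition (b). This is the substitution that produces the quoted coefficients.

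\medskip

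The remaining point is the equivalence between the Yang-Mills condition (3.1) and condition (a), that $\omega$ is coclosed, together with the parenthetical claims that $\omega$ is then closed with constant norm. Closedness of $\omega$ is automatic: by Remark 3.10, $\omega = d\theta$ for the principal connection $\theta$, so $d\omega = 0$ identically. For the coclosed assertion I would use the identification, noted just after Definition 3.1, that for totally geodesic $S^1$ fibers the Yang-Mills condition is equivalent to $\check{\delta}A = 0$, and that under the correspondence (3.4) the horizontal codifferential $\check{\delta}A$ corresponds precisely to the base codifferential $\check{\delta}\omega$ of the two-form $\omega$ on $B$ (this is the content behind Equation 9.65 and the surrounding discussion in \cite{Besse87}). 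Thus (3.1) holds if and only if $\check{\delta}\omega = 0$, i.e. $\omega$ is coclosed. Finally, constancy of $|\omega|$ follows from Lemma 3.6: since $(P,g,X)$ is quasi-Einstein, $|A|$ is constant, and $|\omega|^2 = 4|A|^2$, so $|\omega|$ is constant as well.

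\medskip

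The step I expect to be the genuine obstacle, rather than routine translation, is verifying carefully that $\check{\delta}A = 0$ on $P$ corresponds to $\check{\delta}\omega = 0$ on $B$; this requires tracking how the horizontal covariant derivatives of $A$ push down to covariant derivatives of $\omega$ under the submersion, and confirming that the single vertical factor $\check{U}$ does not contribute extra terms (it does not, precisely because $T \equiv 0$ for totally geodesic fibers and $\check{U}$ is $\nabla_{Y}$-parallel in the horizontal directions up to $A$-terms that cancel). Everything else amounts to substituting (3.4) into the three equations of Proposition 3.5 and collecting constants, so I would present the $|A|^2 = \frac{1}{4}|\omega|^2$ computation and the codifferential correspondence in full, and treat the algebraic rearrangement into (a) and (b) as immediate.
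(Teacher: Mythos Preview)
Your proposal is correct and follows essentially the same route as the paper: translate the three conditions of the earlier proposition (your ``Proposition 3.5'' is the paper's Proposition 3.3) into the language of $\omega$ via the identity $A_Y Z = -\tfrac{1}{2}\omega(Y,Z)\check{U}$, using $|A|^2 = \tfrac{1}{4}|\omega|^2$ and $g(A_Y,A_Z) = \tfrac{1}{4}g(\omega_y,\omega_z)$ to obtain (b), and identifying the Yang-Mills condition $\check{\delta}A=0$ with $\delta\omega=0$ for (a). The paper's proof is terser and simply asserts these equivalences; your write-up spells out the bookkeeping and correctly flags the $\check{\delta}A \leftrightarrow \delta\omega$ correspondence as the only nontrivial step.
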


\begin{proof}
    $\omega$ being coclosed means that $\delta\omega = 0$, where $\delta$ is the formal adjoint of the exterior derivative, $d$.  This is equivalent to $\check{\delta}A = 0$, which, according to the Ricci curvature formulae for submersions, is further equivalent to $\widecheck{Ric}(Y, U) = 0$ for $U \in \mathcal{V}$ and $Y \in \mathcal{H}$.  The second equation follows from solving for $\lambda$ in (3.2), writing $|A|$ in terms of $|\omega|$ using (3.4), and plugging in horizontal vectors into the quasi-Einstein equation.  
\end{proof}

Assuming a compact Einstein base, we are now ready to prove our next main result, Theorem \ref{thm:2}, which provides a necessary and sufficient condition for the existence of a circle bundle over a compact Einstein base admitting a quasi-Einstein metric.  We begin by recalling the definition of almost K\"ahler.

\begin{defn}
    An \textbf{almost K\"ahler structure} on $M$ is an almost Hermitian structure on $M$ such that $\omega$ is closed. 
\end{defn}

\begin{proof}[Proof of Theorem \ref{thm:2}] 
    Since $B$ is Einstein by assumption, we can let $\widecheck{Ric}(Y, Z) = \check{\lambda}\check{g}$.  Rearranging the equation in Proposition 3.8(b), we obtain $g(\omega_Y, \omega_Z) = (2\check{\lambda} - \frac{|\omega|^2}{2} + \frac{2|X|^2}{m})g(Y, Z)$.  Positive definiteness of the metric and $\omega$ shows that $|\omega| = 0$ if and only if $\check{\lambda} + \frac{|X|^2}{m}  = 0$.  If $|\omega| = 0$, then the the circle bundle is trivial, and so $P = B \times S^1$ as claimed, and this takes care of part (a).  \\

    For part (b), suppose that $\check{\lambda} + \frac{|X|^2}{m} > 0$.  In this case, define $J$ as $\omega(Y, Z) = \check{g}(JY, Z)$.  We have that

    \begin{align}
        & g(\omega_Y, \omega_Z) = \sum_i \omega(Y, Y_i)\omega(Z, Y_i) = 
        \sum_i \check{g}(JY, Y_i)\check{g}(JZ, Y_i) = \check{g}(JY, JZ)  \notag
        \\&= \omega(Y, JZ) = -\omega(JZ, Y) = -\check{g}(J^2Z, Y) =  \left (2{\check{\lambda}} - \frac{|\omega|^2}{2} + \frac{2|X|^2}{m}\right )g(Y, Z).
    \end{align}

    From this, it is evident that an appropriate multiple of $\omega$ has the property that $J^2 = -1$.  In other words, there exists an almost complex structure on $B$, $\omega'$, which differs from $\omega$ by a constant factor.  \\

    Conversely, consider an almost K\"ahler structure $(J, \check{g}, \omega)$ on $P$ such that $[\omega]$ is a constant multiple of $\alpha \in H^2(B, \mathbb{R})$.  Then, one can choose a constant $c$ so that 
        \begin{align}
        & g(\omega_Y, \omega_Z) =  -\check{g}(J^2Z, Y) = \left (2{\check{\lambda}} - \frac{|\omega|^2}{2} + \frac{2|cX|^2}{m} \right )g(Y, Z)
    \end{align}
    \noindent is satisfied, so that condition (b) in the previous lemma is satisfied.  
\end{proof}

\begin{rmk}
    In the above theorem, the first case corresponds to the case when $dX^* = 0$, which has already been discussed.  As for the second case, for dimension three, this is the case where we have a Sasakian structure.  Furthermore, it is known (as noted in \cite{Boyer06} for example) that the transverse space $B$ is in fact K\"ahler in this case.  
\end{rmk}

\noindent Corollary \ref{cor:1} now follows from Theorem \ref{thm:2}.

\begin{rmk}
    In spite of Corollary \ref{cor:1}, if we assume $B$ is not Einstein, it is still possible to construct quasi-Einstein metrics as principle $S^1$ bundles over $B$ even if $B$ is odd dimensional.  As an example, consider the three dimensional Heisenberg group, $H_3(\mathbb{R})$ with a left-invariant metric, $\check{g}$. 
    Now consider a 4-dimensional compact Lie group $G$, whose Lie algebra is given by $\mathfrak{g}= \mathfrak{h}_3 \oplus_\phi \mathbb{R}$ where $\mathfrak{h}_3$ is the Lie algebra of the Heisenberg group and $\phi = \mathrm{ad}_{Z}|\mathfrak{h}_3$ and $Z \in T_eH_{3}(\mathbb{R})$ is a unit vector.  Valiyakath in Theorem 4.14 of \cite{Valiyakath25} shows that there exists a left-invariant quasi-Einstein metric on $G$, where the vector field $X$ is tangent to $\mathfrak{h}_3$.  

    Now consider a metric on $G$ such that the projection map $\pi: G \to B$ is a Riemannian submersion with totally geodesic $S^1$ fibers.  Here, $B$ is the space that results when we quotient out by the action the $S^1$ induced by the vector field $X$.  This gives an example of an even dimensional quasi-Einstein metric of constant scalar curvature constructed as a circle bundle over a compact space where $X$ is tangent to the fibers of the submersion. 
\end{rmk}

\section{The Canonical Variation}

Let $\pi: (M,g) \to B \cong M/S^1$ be a Riemannian submersion with totally geodesic $S^1$ fibers as outlined in the previous section, where $(M,g)$ has constant scalar curvature.  In this section, we are interested in scaling the metric on the vertical distribution $\mathcal{V}$ by a constant factor $t$, while leaving the metric on $\mathcal{H}$ unchanged.  This is called the \textit{canonical variation} of $\pi$, and a major focus of this section will be the finding if and when there exist multiple quasi-Einstein metrics in the canonical variation.  

\begin{defn}
    Let $\pi:(M, g) \to B$ be a Riemannian submersion.  The \textbf{canonical variation of g with respect to $\pi$}, denoted by $g_t$ is defined as follows for $t>0$:
        \begin{enumerate}
            \item[(a)] $g_t(u, v) = g(u, v)$ for $u,v \in \mathcal{H}$.
            \item[(b)] $g_t(y, v) = 0$ for $u \in \mathcal{H}$ and $y \in \mathcal{V}$.
            \item[(c)] $g_t(y, z) = tg(y,z)$ for $y,z \in \mathcal{V}$.
        \end{enumerate}
    Observe that when $t=1$, $g_t = g$.
\end{defn}

\begin{ex}
    Let $\pi: S^3 \to S^2$ be the Hopf fibration, where we equip $S^3$ with the standard round metric, which we call $g$.  The canonical variation of $\pi$, $(M,g_t)$, are known as \textbf{Berger Spheres}.  These metrics define a one parameter family of left-invariant homogeneous metrics on $S^3$ (equivalently $SU(2)$).  More generally, the Hopf fibration can be generalized as $S^{2n+1}$ fibers over $\mathbb{C}P^n$, where the standard fibration is the $n=1$ case, so we can generalize to a map $\pi: S^{2n+1} \to \mathbb{C}P^n$.  In Example 3.5 of \cite{Ghosh19}, Ghosh shows that a $D$-homotheity of $S^{2n+1}$ here is $\eta$-Einstein and admits a quasi-Einstein metric.
\end{ex}

\noindent In general, one can describe the Ricci curvatures $\mathrm{Ric}_t$ in terms of the tensor $A$ associated to the submersion $\pi$ and the Ricci curvature on the base.  We state these formulae below, which follows from Proposition 9.70 of \cite{Besse87} by simply taking the fibers of $\pi$ to be $S^1$.

\begin{prop}[\cite{Besse87}]
    Let $\pi: (M, g) \to B$ a Riemannian submersion with totally geodesic $S^1$ fibers.  Then the Ricci curvatures of the canonical variation $\mathrm{Ric}_t$ are given by:
        \begin{itemize}
            \item[(a)] $\mathrm{Ric}_t(U, V) = t^2g(AU, AV)$ for $U, V \in \mathcal{V}$.
            \item[(b)] $\mathrm{Ric}_t(Y, Z) = \widecheck{\mathrm{Ric}}(Y, Z) - 2tg(A_Y, A_Z)$ for $Y,Z \in \mathcal{H}$.
            \item[(c)] $\mathrm{Ric}_t(Y, U) = tg((\check{\delta}A)(Y, U))$ where $\check{\delta}$ is the codifferential operator on forms on $B$.
        \end{itemize}
\end{prop}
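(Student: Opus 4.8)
The plan is to deduce the three formulae directly from the general canonical-variation formulae of Besse (Proposition 9.70 of \cite{Besse87}), specializing to the case at hand in which the fibers are circles. The whole content of the statement is that the one remaining ``fiber'' contribution in Besse's general expression drops out when the fibers are one-dimensional, so the proof is a specialization rather than a fresh computation.

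First I would record the general statement for totally geodesic fibers. For a Riemannian submersion $\pi \colon (M,g) \to (B,\check g)$ with totally geodesic fibers, Proposition 9.70 of \cite{Besse87} expresses the Ricci tensor $\mathrm{Ric}_t$ of the canonical variation $g_t$ in terms of just three pieces of data: the intrinsic Ricci curvature $\widehat{\mathrm{Ric}}$ of the fibers, the O'Neill tensor $A$, and its codifferential $\check\delta A$. The only difference from the three formulae asserted here is that the vertical--vertical component carries an extra summand, namely $\mathrm{Ric}_t(U,V) = \widehat{\mathrm{Ric}}(U,V) + t^2 g(AU,AV)$, while the horizontal--horizontal and mixed components already read $\mathrm{Ric}_t(Y,Z) = \widecheck{\mathrm{Ric}}(Y,Z) - 2t\, g(A_Y,A_Z)$ and $\mathrm{Ric}_t(Y,U) = t\, g((\check\delta A)(Y,U))$. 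Note that the $T$-tensor terms present in the fully general O'Neill framework are already absent, since total geodesicity of the fibers gives $T \equiv 0$.

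Next I would invoke the single feature that distinguishes our setting: the fibers are one-dimensional. A one-dimensional Riemannian manifold is flat, so its intrinsic Ricci curvature vanishes identically, that is $\widehat{\mathrm{Ric}} \equiv 0$. Substituting $\widehat{\mathrm{Ric}} = 0$ into the vertical--vertical formula collapses it to $\mathrm{Ric}_t(U,V) = t^2 g(AU,AV)$, which is precisely (a); the horizontal--horizontal and mixed components are untouched by this substitution and yield (b) and (c) verbatim.

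The only point needing genuine care is bookkeeping rather than geometry: one must confirm that the powers of $t$ attached to the $A$-terms in each component match Besse's normalization once the vertical metric is rescaled by $t$, and that the conventions for the O'Neill tensor $A$, for the contraction $g(A_Y,A_Z)$, and for the codifferential $\check\delta$ fixed in Section 3 agree with those of \cite{Besse87}. I would therefore check the $t$-dependence in each of (a)--(c) directly against the statement of Proposition 9.70, taking the fiber to be $S^1$. No genuine obstacle arises, as the result is a direct specialization of a known formula to one-dimensional, totally geodesic fibers.
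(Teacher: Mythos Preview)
Your proposal is correct and matches the paper's approach exactly: the paper does not give an independent proof but simply states that the formulae follow from Proposition 9.70 of \cite{Besse87} ``by simply taking the fibers of $\pi$ to be $S^1$,'' which is precisely the specialization you describe (the only substantive point being that $\widehat{\mathrm{Ric}}\equiv 0$ for one-dimensional fibers).
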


\noindent Combining the above formulae with the quasi-Einstein condition, we obtain the following necessary and sufficient conditions for the canonical variation $(M, g_t, X)$ to be quasi-Einstein for a given $t>0$.

\begin{lem} \label{lem:eq}
    Let $\pi: (M, g) \to B$ a Riemannian submersion with totally geodesic $S^1$ fibers where $(M,g)$ has constant scalar curvature.  Furthermore, suppose $\{X_t\}$ is a family of smooth vector fields on $M$ tangent to the fibers of $\pi$.  Then the one parameter family of triples $(M,g_t, X_t)$ in the canonical variation on $g$ with respect to $\pi$ is quasi-Einstein if and only if the following hold for all $t>0$:
        \begin{itemize}
            \item[(a)] $\mathcal{H}$ is a Yang-Mills connection (i.e. $\check{\delta}A_t=0$).
            \item[(b)] $\widecheck{\mathrm{Ric}}(Y, Z) - 2tg(A_Y, A_Z) = (t|A|^2-\frac{tf^2(t)}{m})g(Y,Z)$ for all horizontal vectors $Y$ and $Z$.
        \end{itemize}
    where $X_t = f(t)U$, and $U$ is a unit vector field (with respect to the original metric $g$ corresponding to $t=1$) tangent to the fibers of $\pi$.
\end{lem}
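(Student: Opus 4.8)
The plan is to combine the Ricci-curvature formulae for the canonical variation recorded in the preceding proposition with the quasi-Einstein equation~(\ref{qe}), decomposed into its vertical, horizontal, and mixed components relative to the splitting $TM = \mathcal{V} \oplus \mathcal{H}$. The essential preliminary observation is that for each fixed $t$ the field $X_t = f(t)U$ is Killing with respect to $g_t$: the $S^1$ action preserves both the fibers and the horizontal distribution, so the canonical variation $g_t$, which only rescales the fiber metric by the constant $t$, remains $S^1$-invariant, whence its generator $U$ is Killing for $g_t$ and hence so is $X_t$ (as $f(t)$ is a constant). Consequently the Lie-derivative term in~(\ref{qe}) vanishes and the quasi-Einstein condition for $(M, g_t, X_t)$ is equivalent to the pointwise identity $\mathrm{Ric}_t = \lambda_t\, g_t + \frac{1}{m}X_t^* \otimes X_t^*$ for a constant $\lambda_t$.

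First I would record the two norm computations that carry the $t$-dependence: $g_t(U,U) = t$ and hence $|X_t|_{g_t}^2 = f^2(t)\,g_t(U,U) = t f^2(t)$, while $X_t^*(U) = g_t(X_t, U) = t f(t)$ and $X_t^*(Y) = 0$ for horizontal $Y$. Then I would feed each type of vector pair into the reduced identity. A mixed pair $Y \in \mathcal{H}$, $U \in \mathcal{V}$ makes the right-hand side vanish, so by part (c) of the preceding proposition this forces $t\,g((\check{\delta}A)(Y,U)) = 0$, i.e. the Yang-Mills condition (a). For the vertical pair $(U,U)$, part (a) gives $\mathrm{Ric}_t(U,U) = t^2 g(AU, AU) = t^2 |A|^2$, using that the fiber is one-dimensional with unit generator $U$; setting this equal to $\lambda_t\, t + \frac{1}{m}(tf(t))^2$ and dividing by $t$ yields $\lambda_t = t|A|^2 - \frac{t f^2(t)}{m}$.

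Finally, for a horizontal pair $(Y,Z)$, part (b) gives $\mathrm{Ric}_t(Y,Z) = \widecheck{\mathrm{Ric}}(Y,Z) - 2t\,g(A_Y, A_Z)$, whereas the right-hand side equals $\lambda_t\, g_t(Y,Z) = \lambda_t\, g(Y,Z)$; substituting the value of $\lambda_t$ found above produces precisely condition (b). This settles the forward direction, and the converse follows by reversing the computation: given (a) and (b), defining $\lambda_t$ by the displayed formula makes all three component equations hold, and since $\mathcal{V} \oplus \mathcal{H}$ exhausts $TM$ the full tensor identity, hence the quasi-Einstein equation, is recovered.

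I expect the main point requiring care to be the claim that $X_t$ is Killing for $g_t$; everything afterward is bookkeeping with the O'Neill--Besse formulae. One should check carefully that the canonical variation preserves $S^1$-invariance, which rests on the $S^1$ action fixing both $\mathcal{V}$ and $\mathcal{H}$, since it is exactly the vanishing of $\mathcal{L}_{X_t} g_t$ that collapses~(\ref{qe}) to an algebraic identity for $\mathrm{Ric}_t$. A secondary subtlety is that $\lambda_t$ must genuinely be constant: the norm $|X_t|_{g_t}^2 = tf^2(t)$ is automatically constant on $M$ (since $U$ is everywhere $g$-unit), so this reduces to the constancy of $|A|^2$, which one reads off as in Lemma~3.5 from the trace of condition (b) together with the constancy of the base scalar curvature.
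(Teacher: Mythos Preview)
Your argument is correct and follows essentially the same route as the paper: plug vertical, mixed, and horizontal pairs into the quasi-Einstein equation, use Proposition~4.3 to rewrite $\mathrm{Ric}_t$, solve the vertical equation for $\lambda_t$, and substitute into the horizontal one. The paper's proof is in fact terser than yours: it simply writes $\mathrm{Ric}_t = \lambda_t g_t + \tfrac{1}{m}X_t^* \otimes X_t^*$ without ever commenting on why the Lie-derivative term drops out, whereas you supply the reason (the canonical variation is still $S^1$-invariant, so $U$ and hence $X_t$ remain Killing for $g_t$). Your flagging of the constancy of $\lambda_t$ is also more than the paper does; the paper never addresses it in this lemma and implicitly treats $t|A|^2 - \tfrac{tf^2(t)}{m}$ as a constant. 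Note, however, that your proposed justification for that constancy in the converse direction (invoking the base having constant scalar curvature) is circular at this point in the argument, since Lemma~3.4 assumes quasi-Einstein, which is what you are trying to establish; in the forward direction the constancy of $|A|^2$ is immediate from the vertical equation since $\lambda_t$ is constant by hypothesis.
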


\begin{proof}
    To begin, we suppose that there exists a one parameter family of quasi-Einstein solutions $(M,g_t, X_t)$ in the canonical variation of $g$ with respect to $\pi$.  Since we assume the family $\{X_t\}$ is tangent to the fibers for $\pi$ for all $t$, we can write $X_t = f(t)U$ for some unknown smooth function $f(t)$.  Thus, we require
        \begin{align} \label{qe-t}
                \mathrm{Ric}_t =\lambda_tg_t + \frac{X_t^* \otimes X_t^*}{m} 
        \end{align} 
    where $t$ subscripts denote with respect to $g_t$ and where we are once again using the fact that $X_t$ must be a Killing field of constant length.  By Proposition 4.3(c), a Yang-Mills connection ($\check{\delta}A_t = 0$) is equivalent to the condition $\mathrm{Ric}_t(U, Z) = 0$ for a vertical vector $U$ and a horizontal vector $Z$.  This condition is satisfied by plunging in $U$ and $Z$ into (\ref{qe-t}), so condition (a) is satisfied.  Using Proposition 4.3(a) in conjunction with (\ref{qe-t}), and plugging in unit length vertical vectors, we obtain $\lambda_t = |A_t|^2-\frac{|X_t|_t^2}{m}$, and so
        \[ \mathrm{Ric}_t(Y,Z) = \left (t|A|^2-\frac{tf^2(t)}{m} \right )g(Y,Z)\]
    for horizontal vectors $Y$ and $Z$.
    Here, we further use the facts that $|A_t|^2 = t|A|^2$ and $|X_t|_t^2 = |f(t)U|_t^2= tf(t)^2|U|^2 = tf(t)^2$.  Norms without a subscript are assumed to be with respect the $t=1$ metric.  Next, using Proposition 4.3(b) to replace the left-hand side of the above, we obtain
        \[\widecheck{\mathrm{Ric}}(Y, Z) - 2tg(A_Y, A_Z) = \left (t|A|^2-\frac{tf^2(t)}{m} \right ) g(Y,Z).\]
     This proves condition (b). \\  

     Conversely, assume conditions (a) and (b) are satisfied for all $t>0$.  Condition (b) implies that (\ref{qe-t}) is satisfied for horizontal vectors, and condition (a) implies (\ref{qe-t}) is satisfied when one entry is a vertical vector and the other is a horizontal vector.  Plugging (unit length) vertical vectors into (\ref{qe-t}) and using Proposition 4.3(a), we obtain $\lambda_t = |A_t|^2-\frac{|X_t|_t^2}{m}$, which is implied by condition (b).  This shows (\ref{qe-t}) is indeed satisfied for any choice of vectors, and so the claim is proved.
\end{proof}

\noindent Note that this proposition reduces to Proposition 3.8 in the case where $t=1$ and the fibers have length $2\pi$. \\

Given this notion of canonical variation, a natural question to ask is: If $(M, g, X)$ is quasi-Einstein, under what conditions does there exist another metric $(M,g_t, X_t)$ in the canonical variation?  Furthermore, given an Einstein metric $(M,g,X)$, is there ever another Einstein metric in the canonical variation?  We answer these questions below.  We begin by proving the following lemma.

\begin{lem} \label{lem:c_t}
     Let $\pi: (M,g) \to B$ be a Riemannian submersion with totally geodesic $S^1$ fibers, where $(M,g)$ has constant scalar curvature.  Further suppose that $(M,g,X)$ is quasi-Einstein where $X$ is tangent to the fibers of $\pi$.  If $(M, g_{t_0}, X_{t_0})$ is another quasi-Einstein metric in the canonical variation of $\pi$, then $X = c_{t_0}U$, where
        \[c_{t_0} = \sqrt{\frac{|X|^2}{t_0} + \frac{m(t_0-1)(n+1)|A|^2}{t_0(n-1)}}.\]
    \noindent Here, $|X|$ is with respect to the $t=1$ metric, $U$ is a unit vector field with respect to the $t=1$ metric tangent to the fibers of $\pi$, and $n$ is the dimension of $M$.
\end{lem}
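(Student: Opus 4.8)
The plan is to extract two scalar identities from Lemma \ref{lem:eq}(b)---one at the original parameter $t=1$ and one at $t=t_0$---and solve the resulting linear relation for the squared magnitude of the deformed vector field. Writing $X_t=f(t)U$ with $U$ of unit length in the $t=1$ metric, the given triple $(M,g,X)$ corresponds to $t=1$, so $f(1)=|X|$, and the assertion $X_{t_0}=c_{t_0}U$ amounts to identifying $f(t_0)=c_{t_0}$.

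The crucial structural observation is that the canonical variation rescales only the fibers: the base metric $\check g$ is untouched, so $\widecheck{\mathrm{Ric}}$, the tensor $A$, and the scalar $|A|^2$ appearing in Lemma \ref{lem:eq}(b) are the same in both equations (indeed $|A|$ is constant, as established in Section 3, as is $|X|$). Hence the only quantities differing between the $t=1$ and $t=t_0$ instances of condition (b) are the explicit factors of $t$ and the value $f(t)$. Concretely I would record, for horizontal $Y,Z$,
\[
\widecheck{\mathrm{Ric}}(Y,Z)-2g(A_Y,A_Z)=\Big(|A|^2-\tfrac{|X|^2}{m}\Big)g(Y,Z),
\]
\[
\widecheck{\mathrm{Ric}}(Y,Z)-2t_0\,g(A_Y,A_Z)=\Big(t_0|A|^2-\tfrac{t_0 f(t_0)^2}{m}\Big)g(Y,Z).
\]

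Next I would subtract these to eliminate $\widecheck{\mathrm{Ric}}$, leaving a relation between $g(A_Y,A_Z)$ and $g(Y,Z)$, and then trace over an orthonormal horizontal frame. Since the fibers are one-dimensional, $\dim\mathcal H=n-1$, so the trace of $g(Y,Z)$ contributes $n-1$ while $\sum_i g(A_{Y_i},A_{Y_i})=|A|^2$; this collapses the tensorial identity to a single scalar equation that is linear in $f(t_0)^2$. Solving it produces the combinatorial factor $\tfrac{2}{n-1}+1=\tfrac{n+1}{n-1}$, and rearranging yields exactly
\[
f(t_0)^2=\frac{|X|^2}{t_0}+\frac{m(t_0-1)(n+1)|A|^2}{t_0(n-1)}=c_{t_0}^2,
\]
whence $X_{t_0}=c_{t_0}U$.

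The computation is essentially routine once it is set up; the only points requiring care are the bookkeeping of which metric each norm is taken with respect to (all the geometric data $\widecheck{\mathrm{Ric}}$, $A$, $|A|^2$, $|X|^2$ are the fixed $t=1$ quantities, with the $t$-dependence made explicit in Lemma \ref{lem:eq}), and using $\dim\mathcal H=n-1$ correctly so that the $(n+1)/(n-1)$ factor emerges. I would note that constancy of $|A|$ and of the base scalar curvature is not strictly needed for this pointwise algebra---it suffices that the base geometry is common to both equations at each point---but both hold automatically here.
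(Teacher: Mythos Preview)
Your proposal is correct and follows essentially the same argument as the paper: write down Lemma \ref{lem:eq}(b) at $t=1$ and at $t=t_0$, subtract to eliminate $\widecheck{\mathrm{Ric}}$, trace over the $(n-1)$-dimensional horizontal distribution, and solve the resulting linear equation for $f(t_0)^2$. Your bookkeeping (all geometric quantities taken in the $t=1$ metric, the factor $\tfrac{2}{n-1}+1=\tfrac{n+1}{n-1}$) matches the paper's computation exactly.
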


\begin{proof}
    Since $(M,g,X)$ is quasi-Einstein, we have that
        \begin{align} \label{qe-o00}
            \mathrm{\widecheck{Ric}}(Y,Z) - 2g(A_Y, A_Z) = \left ( |A|^2 - \frac{|X|^2}{m} \right )g(Y,Z) 
        \end{align}

    \noindent for arbitrary horizontal vectors $Y$ and $Z$.  
    Now, by the previous lemma, if $(M, g_{t_0}, X_{t_0})$ is quasi-Einstein for some number $t \neq 1$, then 
        \begin{align} \label{eq:3}
            \mathrm{\widecheck{Ric}}(Y,Z) - 2t_0g(A_Y, A_Z) = \left ( t_0|A|^2-\frac{tc_{t_0}^2}{m} \right )g(Y,Z).
        \end{align}

    \noindent where $c_{t_0}$ is the constant satisfying $X_{t_0} = c_{t_0}U$.  Subtracting (\ref{eq:3}) from (\ref{qe-o00}), one obtains 
    
        \begin{align} \label{qe-o2}
            2(1-t_0)g(A_Y, A_Z) = \left ( (1-t_0)|A|^2 + \frac{tc_{t_0}^2-|X|^2}{m} \right ) g(Y,Z)
        \end{align}

    \noindent (where again $|X|$ is the norm of $X$ in the $t=1$ metric).  Taking the trace of (\ref{qe-o2}) over an orthonormal basis $\{X_i\}$ of the horizontal distribution, one obtains:

        \begin{align} \label{qe-o3}
            2(t_0-1)|A|^2 = \left ( (1-t_0)|A|^2 + \frac{tc_{t_0}^2-|X|^2}{m} \right ) (n-1)
        \end{align}

    \noindent where $n$ is the dimension of $B$.  Rearranging (\ref{qe-o3}) yields the following expression for $c_{t_0}$:

        \[c_{t_0} = \sqrt{\frac{|X|^2}{t_0} + \frac{m(t_0-1)(n+1)|A|^2}{t_0(n-1)}} \]

    \noindent as claimed.
\end{proof}

We are now ready to show there are two quasi-Einstein metics in the canonical variation if and only if the base is Einstein.

\begin{thm} \label{qe-b}
    Let $\pi: (M,g) \to B$ be a Riemannian submersion with totally geodesic $S^1$ fibers where $(M,g)$ has constant scalar curvature, and suppose $(M,g,X)$ is quasi-Einstein where $X$ is tangent to the fibers of $\pi$.  Then there exists a distinct quasi-Einstein metric $(M,g_{t_0},X_{t_0})$ (where $t_0 \neq 1$) in the canonical variation of $g$ with $X_{t_0}$ tangent to the fibers of $\pi$ if and only if $B$ is Einstein.  In this case, we have that $X_{t_0} = c_{t_0}U$, where $c_{t_0}$ is the constant defined in the previous lemma, and $U$ is a unit vector field with respect to the $t=1$ metric tangent to the fibers of $\pi$.
\end{thm}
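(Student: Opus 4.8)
The plan is to prove both directions by analyzing the horizontal equation (condition (b) of Lemma \ref{lem:eq}) and how the substitution $X_{t_0} = c_{t_0}U$ from Lemma \ref{lem:c_t} constrains the geometry of the base. The forward direction ($B$ Einstein $\Rightarrow$ a second quasi-Einstein metric exists) should be the more constructive half, and the reverse direction (existence of a distinct metric $\Rightarrow B$ Einstein) is where the substance lies.

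For the reverse direction, I would start from equation (\ref{qe-o2}) derived in the previous lemma, namely
\[
    2(1-t_0)g(A_Y, A_Z) = \left( (1-t_0)|A|^2 + \frac{t_0 c_{t_0}^2 - |X|^2}{m} \right) g(Y,Z),
\]
which holds for all horizontal $Y, Z$. The key observation is that, since $t_0 \neq 1$, I can divide by $2(1-t_0)$ and conclude that $g(A_Y, A_Z)$ is a \emph{constant multiple} of $g(Y,Z)$; call this constant $\kappa$. Substituting this back into the original quasi-Einstein horizontal equation (\ref{qe-o00}) gives
\[
    \widecheck{\mathrm{Ric}}(Y,Z) = \left( |A|^2 - \frac{|X|^2}{m} + 2\kappa \right) g(Y,Z),
\]
so that $\widecheck{\mathrm{Ric}}$ is a constant multiple of $\check{g}$ on the horizontal distribution. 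Since $\pi$ is a Riemannian submersion, the horizontal vectors project isometrically onto $TB$, and the bracketed quantity is constant (because $|A|$ and $|X|$ are constant by Lemma \ref{lem:eq} and the results cited there). Hence $\check{g}$ is Einstein.

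For the forward direction, I would assume $B$ is Einstein, so $\widecheck{\mathrm{Ric}}(Y,Z) = \check{\lambda}\, g(Y,Z)$, and exhibit an explicit $t_0 \neq 1$ together with the prescribed $c_{t_0}$ for which condition (b) of Lemma \ref{lem:eq} holds. The natural route is to plug the Einstein condition into (\ref{qe-o00}) to solve for $g(A_Y, A_Z)$, conclude it equals a constant times $g(Y,Z)$, and then verify directly that the candidate $c_{t_0}$ from Lemma \ref{lem:c_t} makes (\ref{eq:3}) an identity for \emph{every} $t_0 > 0$ in the allowable range. The only genuine constraint is that the expression under the radical defining $c_{t_0}$ must be nonnegative, which determines the interval of admissible $t_0$; as long as this interval contains a value other than $1$, a distinct metric exists, and the Yang-Mills condition (a) is automatic since it depends only on $\check{\delta}A = 0$, which is unchanged across the canonical variation.

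I expect the main obstacle to be the reverse direction's passage from ``$g(A_Y,A_Z)$ is proportional to $g(Y,Z)$" to ``$\widecheck{\mathrm{Ric}}$ is proportional to $\check{g}$ on all of $TB$," since one must be careful that the horizontal lift identifies the Ricci tensor of $B$ with the restriction of the relevant tensors on $M$, and that the proportionality holds pointwise with a \emph{globally} constant factor rather than a function on $B$. The constancy of $|A|$ and $|X|$ established earlier is exactly what rescues this, so the argument hinges on invoking those facts at the right moment. A secondary subtlety is ensuring the degenerate case where the bracketed coefficient vanishes is handled, but this simply corresponds to $\widecheck{\mathrm{Ric}}$ being a (possibly zero) multiple of $\check{g}$, which is still Einstein.
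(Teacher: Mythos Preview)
Your proposal is correct and follows essentially the same approach as the paper. For the converse (a second quasi-Einstein metric $\Rightarrow$ $B$ Einstein), both you and the paper subtract the two horizontal equations to force $g(A_Y,A_Z)=\kappa\,g(Y,Z)$ with $\kappa$ constant (using that $|A|$ and $|X|$ are constant), and then read off that $\widecheck{\mathrm{Ric}}$ is a constant multiple of $\check g$; for the forward direction, both arguments insert the Einstein hypothesis into the $t=1$ equation to see that $g(A_Y,A_Z)$ is a constant multiple of $g(Y,Z)$, then check that the prescribed $c_{t_0}$ makes the $t_0$-equation an identity (the paper does this by first deriving $\check\lambda=\tfrac{n+1}{n-1}|A|^2-\tfrac{|X|^2}{m}$ via a trace, which is exactly what your verification would reduce to), with the Yang-Mills condition carried along automatically.
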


\begin{proof}
    First suppose $B$ is Einstein.  Then $\mathrm{\widecheck{Ric}} = \check{\lambda}g$ for some constant $\check{\lambda}$.  We have that $\check{\delta}(A_t)= t\check{\delta}(A) = 0$ since $(M,g,X)$ is quasi-Einstein by assumption, and so condition (a) of Lemma 4.4 is satisfied.  Since $(M,g,X)$ is quasi-Einstein, we have that
        \begin{align} \label{qe-o0}
            \mathrm{\widecheck{Ric}}(Y,Z) - 2g(A_Y, A_Z) = \left ( |A|^2 - \frac{|X|^2}{m} \right )g(Y,Z) 
        \end{align}

    \noindent for arbitrary horizontal vectors $Y$ and $Z$.  
    Now, by the previous lemma, if $(M, g_{t_0}, X_{t_0})$ is quasi-Einstein for some number $t \neq 1$, then $X_{t_0} = c_{t_0}U$ where the constant $c_{t_0}$ is given in the statement of the previous lemma.  By Lemma \ref{lem:eq}, it now suffices to show that 
        \begin{align} \label{eq:2}
            \mathrm{\widecheck{Ric}}(Y,Z) - 2t_0g(A_Y, A_Z) = \left ( t_0|A|^2-\frac{tc_{t_0}^2}{m} \right )g(Y,Z).
        \end{align}

    \noindent for some positive constant $t_0 \neq 1$.  Using the assumption that $B$ is Einstein and rearranging, we can rewrite equations (\ref{qe-o0}) and (\ref{eq:2}), respectively, as follows:

        \begin{align} \label{qe-o0_1}
            2g(A_Y, A_Z) = \left ( \check{\lambda} - |A|^2 + \frac{|X|^2}{m} \right )g(Y,Z) 
        \end{align}

    \noindent and

        \begin{align} \label{eq:2_1}
            2t_0g(A_Y, A_Z) = \left ( \check{\lambda} - t_0|A|^2 + \frac{t_0c_{t_0}^2}{m} \right )g(Y,Z).
        \end{align}

    Plugging in the expression for $c_0$ found in the previous lemma,(\ref{eq:2_1}) can be rewritten as

        \begin{align} \label{eq:2_2}
            2t_0g(A_Y, A_Z) = \left ( \check{\lambda} - t_0|A|^2 + \frac{|X|^2}{m}+\frac{(t_0-1)(n+1)|A|^2}{n-1} \right )g(Y,Z).
        \end{align}

    We claim that for a choice of $t_0 \neq 1$ such that $c_{t_0}$ is well-defined, (\ref{qe-o0_1}) implies (\ref{eq:2_1}), which would imply the forward direction of the theorem.  To see why this implication holds, we first multiply (\ref{qe-o0_1}) by our choice of $t_0$ to obtain

    \begin{align} \label{qe-o0_2}
        2t_0g(A_Y, A_Z) = \left ( \check{\lambda}t_0 - t_0|A|^2 + \frac{t_0|X|^2}{m} \right )g(Y,Z).
    \end{align}

    Our goal will be to show that the expression inside parentheses of (\ref{eq:2_1}) and (\ref{qe-o0_1}) are equivalent.  To do this, we will first derive an expression for $\check{\lambda}$ which involves only $|A|$, $|X|$, $n$ (the dimension of $M$) and $m$.  Taking the trace of Proposition 4.3(b) when $t=1$, in combination with the quasi-Einstein condition, we obtain:

    \begin{align}
        \check{\lambda}(n-1)-2|A|^2 = \left ( |A|^2 - \frac{|X|^2}{m}\right ) (n-1).
    \end{align}

    \noindent Solving for $\check{\lambda}$ yields

    \begin{align} \label{eq:lambda}
        \check{\lambda}=\frac{n+1}{n-1}|A|^2-\frac{|X|^2}{m}.
    \end{align}

    Finally, we can substitute our expression for $\check{\lambda}$ into (\ref{qe-o0_1}) and (\ref{eq:2_2}).  Doing so, we see that they are equal.  Seeing that the former implies the latter, this shows that $(M,g_{t_0},X_{t_0})$ is quasi-Einstein as claimed.

    Conversely, suppose that there exists two distinct quasi-Einstein metrics $(M,g,X)$ and $(M,g_{t_0}, X_{t_0})$ in the canonical variation of $\pi$.  Then, we have that

     \[ \widecheck{\mathrm{Ric}}(Y, Z) - 2g(A_Y, A_Z) = \left (|A|^2-\frac{|X|^2}{m} \right )g(Y,Z) \]

    and 

    \[ \widecheck{\mathrm{Ric}}(Y, Z) - 2t_0g(A_Y, A_Z) = \left (t_0|A|^2-\frac{|X|^2}{m} \right )g(Y,Z) \]

     for all horizontal vectors $Y$ and $Z$ and $t>0$.  Now, we can take the two equations and subtract one from the other.  Doing so, we obtain

    \[  - 2(1-t_0)g(A_Y, A_Z) = \left ((1-t_0)|A|^2-\frac{|X|^2-t_0|X_{t_0}|}{m} \right )g(Y,Z). \]

    Hence, $g(A_Y,  A_Z) = kg$ for some constant $k$, since $|A|$ must be constant by Lemma 3.4, and we know from earlier that $|X|$ must be constant as well.  It follows that $\widecheck{\mathrm{Ric}}$ is Einstein, since it is a sum of two tensors proportional to $g$.
\end{proof}

\begin{rmk}
    Note that the argument above shows that if $(M,g_1, X_1)$ and $(M,g_{t_0}, X_{t_0})$ (for $t_0 \neq 1)$ are both quasi-Einstein, then $(M,g_t,X_t)$ is also quasi-Einstein at all $t$ values where the constants $c_t$ are well defined.  To find all such $t$ values, we are simply looking for conditions on $t, m, |X|^2, n$ and $|A|^2$ such that
        \[ \frac{|X|^2}{t} + \frac{m(t-1)(n+1)|A|^2}{t(n-1)} \geq 0  \] or, equivalently, 
    \[ t \geq 1 -\frac{|X|^2(n-1)}{m(n+1)|A|^2}.  \]
    Observe that this condition is always satisfied if $m > 0$ and $t>1$.  If $m<0$, then the condition is a little more restrictive.  However, we can observe it is satisfied if $t > 1$ and $|X|=0$, for instance.
\end{rmk}

\begin{proof}[Proof of Corollary 1.6:]
    Let $(B,\check{g})$ be a compact Einstein manifold, and suppose $p:P \to B$ is a nontrivial principal $S^1$ bundle with $\check{\lambda>0}$ and $m>0$.  Suppose $P$ admits a quasi-Einstein metric.  By the work above, there exist an infinite family of quasi-Einstein metrics $(M,g_t,X_t)$ in the canonical variation.  It suffices to find a $t$ value such that $c_t = 0$.  Setting $c_{t_0} = 0$, we obtain the condition
        \[ t = 1 -\frac{|X|^2(n-1)}{m(n+1)|A|^2}. \]
    Since $\check{\lambda}>0$ by assumption, (4.13) implies 
        \[ \frac{n+1}{n-1}|A|^2 > \frac{|X|^2}{m} \]
    \noindent which in turn implies that there is a $t \in (0,1)$ such that $c_t = 0$.  Hence, there is an Einstein metric in the canonical variation as claimed.

    Furthermore, an expression for $\lambda_t$ can be easily be extracted from Equation (\ref{eq:2_2}) by looking at the right-hand side of the equation after the $\check{\lambda}$ term.  We see that

    \[ \lambda_t = t|A|^2-\frac{|X|^2}{m}-\frac{(t-1)(n+1)|A|^2}{n-1}. \]

    \noindent Observe that $\lambda_t$ is a decreasing linear function.  Furthermore, assuming $\check{\lambda}>0$, we see that $\lambda_t>0$ when $t=1$, so we will have metrics with $\lambda_t$ positive, negative, and zero in the canonical variation as claimed.
\end{proof}

\begin{ex}
        Consider the Hopf fibration $\pi: S^3 \to S^2$ with the round metric of radius 1 equipped on $S^3$ when $t=1$.  We consider $\{U, X, Y \}$ to be an orthonormal frame and $S^3$ to be equipped with the round metric with constant curvature 1 when $t=1$.  Under the Hopf fibration, $S^2$ has radius 1, and $\check{\lambda}=1$.  We compute $|A|^2$ as follows (the covariant derivatives for $S^3 \cong SU(2)$ in this case can be found in \cite{Petersen16}):

    \begin{align}
        |A|^2 &= g(A_X U, A_X U) + g(A_Y U, A_Y U) \\\notag &= g(\mathcal{H}\nabla_{X}U, \mathcal{H}\nabla_{X}U) + g(\mathcal{H}\nabla_{Y}U, \mathcal{H}\nabla_{Y}U) \\\notag&= g(Y, Y) + g(-X, -X) = 1 + 1 = 2.
    \end{align}

    \noindent Plugging this value of $|A|^2$ into the expression found for $X_t$, we obtain $X_t = \pm \sqrt{4m-\frac{4m}{t}}U$.  We verify that $X = 0$ for $t=1$ as expected.  This expression also agrees with what Lim showed in \cite{Lim22}.  
\end{ex}

Observe that it follows from Proposition 2.5 that if $m, \lambda_t < 0$, there are no solutions.  More generally, in \cite{Colling25}, Colling-Dunajski show that there are also no solutions when $\lambda<0$ and $m+n-2 < 0$ in the non-constant scalar curvature case (here, $n$ is the dimension of the manifold).

\section{The Integral Curves of $X$ Must be Closed in Dimension Three}

In Section 2, we classified compact quasi-Einstein metrics of constant scalar curvature in dimension three.  From the classification result, it follows that all such metrics can be realized as circle bundles over a manifold base, and so, the geodesics generated by $X$ must be closed.  In this section, we will provide an alternate proof that the geodesics generated by $X$ must be closed in the three dimensional case.  We begin with a couple definitions:

\begin{defn}
    Let $G$ be a compact Lie group acting on a space $M$.  The action of $G$ on $M$ is called \textbf{effective} if $g \cdot x = x$ implies $g=e$ for all points $x \in M$.
\end{defn}

\begin{defn}
    Let $G$ be a compact Lie group acting on a space $M$.  The action of $G$ on $M$ is called $\textbf{almost free}$ if for all points $x \in M$, the isotropy group at $x$, $G_x = \{g \in G | g \cdot x = x\}$ is finite.
\end{defn}

Now, we will state and prove a useful Lemma.

\begin{lem} \label{lem:1}
    Let $X$ be a constant length Killing field on a compact Riemannian manifold $(M, g)$.  Then either:
        \begin{enumerate}
            \item[1)] There exists a Riemannian submersion $\pi: M \to B \cong M/S^1$ with totally geodesic $S^1$ fibers, $B$ is an orbifold, and $X$ is tangent to the fibers of $\pi$ or
            \item[2)] There exists an isometric $\mathbb{T}^2$ action on $M$ (where $\mathbb{T}^2$ is $S^1 \times S^1$).
        \end{enumerate}
\end{lem}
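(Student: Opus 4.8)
The plan is to study the one-parameter group of isometries generated by $X$ and to split into cases according to the dimension of its closure inside the full isometry group. Since $M$ is compact, the Myers--Steenrod theorem guarantees that $\mathrm{Isom}(M,g)$ is a compact Lie group, and since $X$ is Killing it is complete and generates a one-parameter subgroup $\rho\colon \mathbb{R} \to \mathrm{Isom}(M,g)$, $t \mapsto \phi_t$, where $\phi_t$ is the flow of $X$. I would then set $G = \overline{\rho(\mathbb{R})}$, the closure of this subgroup. As the closure of a connected abelian subgroup of a compact Lie group, $G$ is a compact, connected, abelian Lie subgroup (closed, hence a Lie subgroup by Cartan's theorem), and therefore a torus $G \cong \mathbb{T}^k$ for some $k \geq 1$ (we may assume $X \not\equiv 0$, so the flow is nontrivial and $k \geq 1$). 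The integer $k$ is the invariant driving the whole dichotomy.

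If $k \geq 2$, then $\mathbb{T}^k$ contains a two-dimensional subtorus $\mathbb{T}^2$, and restricting the isometric $G$-action to this subtorus produces an isometric $\mathbb{T}^2$-action on $M$. This is alternative (2), and the argument terminates.

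It remains to handle $k = 1$, i.e. $G \cong S^1$. First I would argue that the flow of $X$ is periodic: a continuous homomorphism $\mathbb{R} \to S^1$ whose image has one-dimensional closure is in fact surjective, so $\rho(\mathbb{R}) = S^1$ and the integral curves of $X$ are closed. Thus $X$ generates a genuine isometric $S^1$-action. Because $|X|$ is a nonzero constant, $X$ vanishes nowhere, so the action has no fixed points; since every proper closed subgroup of $S^1$ is finite, all isotropy groups are finite and the action is almost free. Next I would record that the orbits are geodesics: for a Killing field the endomorphism $Y \mapsto \nabla_Y X$ is skew-symmetric, so $g(\nabla_X X, Y) = -g(\nabla_Y X, X) = -\tfrac{1}{2} Y(|X|^2) = 0$ for all $Y$, whence $\nabla_X X = 0$ and each one-dimensional orbit is a geodesic, hence totally geodesic. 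Finally, the quotient of a compact Riemannian manifold by an almost free isometric $S^1$-action is a Riemannian orbifold $B \cong M/S^1$ for which $\pi\colon M \to B$ is an (orbifold) Riemannian submersion with totally geodesic $S^1$-fibers, and $X$ is tangent to these fibers by construction. This is alternative (1).

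The conceptual heart of the proof is the reduction to the torus $G$ and the dimension dichotomy it forces, which is clean. The main technical obstacle lies in the $k=1$ case: one must verify that almost-freeness, rather than genuine freeness, still yields an honest Riemannian orbifold and that the finite isotropy does not obstruct the submersion-with-totally-geodesic-fibers structure. This is where I would lean on the standard theory of isometric actions and their orbifold quotients. The remaining points---periodicity of the flow when $k=1$, geodesy of the orbits, and the fact that restricting to a subtorus preserves effectiveness when $k \geq 2$---are routine.
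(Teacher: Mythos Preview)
Your proposal is correct and follows essentially the same approach as the paper: both take the closure of the one-parameter subgroup generated by $X$ inside the compact Lie group $\mathrm{Iso}(M,g)$, identify it as a torus, and split according to whether that torus is one-dimensional or higher. The only cosmetic difference is that the paper organizes the dichotomy as ``integral curves closed vs.\ not closed'' before passing to the closure in the second case, whereas you pass to the closure immediately and read off the cases from its dimension; the geodesic computation, the almost-freeness from $|X|\neq 0$, and the orbifold quotient are handled the same way (the paper cites \cite{Berestovskii2024} for the almost-free $S^1$ structure where you invoke the standard theory).
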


\begin{proof}
    First, we assume that the integral curves generated by $X$ are closed.  Let $\gamma_x$ be the integral curve generated by $X$ passing through some initial point $x \in M$.  Note that $\gamma_x$ is one dimensional since $X$ is constant length, meaning that it vanishes nowhere (assuming $X$ isn't identically zero).  Since $\gamma_x$ is also closed, it must be topologically $S^1$.  Furthermore, we observe that $0 = D_Y|X|^2 = g(\nabla_Y X, X) = -g(\nabla_X X, Y)$ for an arbitrary smooth vector field $Y$, showing that $\gamma_x$ is totally geodesic.  Here, the first equality uses the assumption $X$ has constant norm, and the third uses the fact that $X$ is Killing, and hence the covariant derivative is antisymmetric.  By Theorems 14 and 15 of \cite{Berestovskii2024}, we also have that there exists a smooth, effective, isometric, and almost free $S^1$ action on $M$.  We can then define $\pi: M \to B \cong M/S^1$ as the canonical projection with the quotient metric.  Checking that $d\pi$ is an isometry, we see that $\pi$ is indeed a Riemannian submersion as desired.  \\

   \noindent Next, we assume that the integral curves $\gamma_x$ generated by $X$ are not closed.  Let $G$ be the one parameter family of isometries of $M$ generated by $X$.  We have that $G \subset \mathrm{Iso}(M,g)$, and since $\mathrm{Iso}(M,g)$ is a closed subgroup, it is also true that $\bar{G} \subset \mathrm{Iso}(M,g)$, where $\bar{G}$ denotes the closure of $G$ in $M$.  Since $\mathrm{Iso}(M,g)$ is a Lie group and since $\bar{G}$ is a closed subgroup of $\mathrm{Iso}(M, g)$, Cartan's theorem tells us that $\bar{G}$ is in fact a closed Lie subgroup of $\mathrm{Iso}(M,g)$.  Furthermore, $\bar{G}$ is an \textit{abelian} Lie subgroup.  To see why, observe that $G$ is abelian since it is a one dimensional Lie group.  Now consider arbitrary points $a, b \in \bar{G}$.  Since $\bar{G}$ is the closure of $G$, we can choose sequences $a_i, b_i \in G$ where $b_i$ converges to $b$ and $a_i$ converges to $a$.  Now consider the sequence $g_i = a_ib_ia_i^{-1}b_{i}^{-1} \in G$.  Since $G$ is abelian, clearly $g_i = e$ (the identity element) for all $i$, and so $g_i$ converges to $e$.  However, by continuity, we also know that $g_i$ converges to $aba^{-1}b^{-1}$, and so $\bar{G}$ is abelian as desired.  \\

   \noindent Since $\bar{G}$ is a closed Lie subgroup in a closed manifold $M$, we further know that $\bar{G}$ is also compact.  It is also clear that $\bar{G}$ is connected since $G$ is connected.  Hence, $\bar{G}$ is a connected, compact, abelian Lie group, and hence must be diffeomorphic to a torus.  Note that if $\bar{G} \cong S^1$, that would contradict the fact that the geodesics $\gamma_x$ are not closed.  Hence, $\mathbb{T}^2 \subset \bar{G}$, and thus $M$ has a 2-torus acting on it by isometries as claimed. 

\end{proof}

Suppose that the geodesics generated by the vector field $X$ are not closed.  As shown in the above lemma, we have an isometric $\mathbb{T}^2$ action on $M$.  Another way to express this fact is that there exist two commuting Killing fields on $M$ whose integral curves are closed, which we shall call $K_1$ and $K_2$.  Without loss of generality, we can also assume that our Killing field $X$ is a linear combination of $K_1$ and $K_2$, that is, $X = aK_1 + bK_2$ for non-zero constants $a, b$.  By our previous assumptions, $|X|$ is constant.  We will show that given these assumptions, $(M, g, X)$ cannot be quasi-Einstein for nonzero $X$. We begin by introducing a few definitions and lemmas which will come in handy later.

\begin{defn}
    Let $G \subset \mathrm{Iso}(M,g)$ be a closed Lie group of isometries of a closed Riemannian manifold $M$.  $M$ is said to have \textbf{cohomogeneity one} if the orbit space $\Omega = M/G$ is one dimensional.
\end{defn}

Note that in general, $\Omega$ may have boundary points even if $M$ does not.  In fact, $\Omega$ may have singular points, and in general, is only an orbifold and not a manifold.  In the cohomogeneity one case, $\Omega$ must be either a closed interval or $S^1$.  In the former case, we want to distinguish points corresponding to boundary points of $\Omega$ and those corresponding to interior points.

\begin{defn}
    Suppose $(M,g)$ is a closed cohomogeneity one manifold under the action of a group $G \subset \mathrm{Iso}(M,g)$.  We say $x \in M$ is a $\textbf{regular point}$ if the image of $x$ in $M/G$ is a interior point, and an \textbf{exceptional point} otherwise.
\end{defn}

Equipped with the above definition, we state the following proposition:

\begin{prop}
    Let $x$ be a regular point of cohomogeneity one 3-manifold $M$ with respect to a group $G \subset \mathrm{Iso}(M,g)$.  Then
        \begin{itemize}
            \item[(a)] There exists a neighborhood $U$ of $x$ such that the metric on $U$ can be written in the form $g = dr^2 + f^2(r)d\theta_1^2+h^2(r)d\theta_2^2$.
            \item[(b)] For the neighborhood $U$ and the metric $g$ in part (a) above, the Ricci tensor is diagonal.  
        \end{itemize}
\end{prop}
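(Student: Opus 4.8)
The plan is to build geodesic polar coordinates adapted to the cohomogeneity one structure around the regular point $x$, and then to read off both claims from the resulting normal form. Since $x$ is regular, its orbit $\mathcal{O} = G \cdot x$ is a principal (hence two-dimensional) orbit, and a tubular neighborhood $U$ of $\mathcal{O}$ is foliated by the nearby principal orbits. I would take the geodesic $\gamma$ issuing from $x$ orthogonally to $\mathcal{O}$ and let $r$ be its arc-length parameter; by the standard slice theory for cohomogeneity one actions, $\gamma$ meets every nearby orbit orthogonally, so $r$ is a well-defined function on $U$ measuring (signed) distance to $\mathcal{O}$, with $\partial_r = \nabla r$ a unit field orthogonal to all orbits.

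For the angular directions I would use the two commuting Killing fields $K_1, K_2$ generating the $\mathbb{T}^2$-action, which are tangent to the orbits and commute with each other and with $\partial_r$ (since $K_i$ preserve $r$, one has $[K_i, \nabla r] = \nabla(K_i r) = 0$). Writing $\theta_1, \theta_2$ for their flow parameters gives coordinates $(r, \theta_1, \theta_2)$ on $U$ in which $g(\partial_r, K_i) = dr(K_i) = 0$ and, by $G$-invariance of $g$, the components $g_{ij} = g(K_i, K_j)$ are functions of $r$ alone. This already puts the metric in the form $dr^2 + \sum_{i,j} g_{ij}(r)\, d\theta_i\, d\theta_j$; the remaining point for (a) is to kill the cross term $g_{12}(r)$. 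Here I would pass to the principal curvature directions of the orbits: the shape operator $S(r) = \tfrac12 g_r^{-1} g_r'$ is self-adjoint for the orbit metric $g_r$, so its eigendirections are orthogonal, and replacing $K_1, K_2$ by the corresponding constant combinations in the Lie algebra $\mathfrak{t}^2$ diagonalizes $g_{ij}$ and yields $g = dr^2 + f^2(r)\, d\theta_1^2 + h^2(r)\, d\theta_2^2$.

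Given the normal form from (a), part (b) is then essentially formal. Because the metric coefficients are independent of $\theta_1$ and $\theta_2$, each reflection $\theta_i \mapsto -\theta_i$ (with the other coordinates fixed) is an isometry of $(U, g)$. Since the Ricci tensor is invariant under isometries, applying the reflection in $\theta_1$ sends $\Ric(\partial_r, \partial_{\theta_1}) \mapsto -\Ric(\partial_r, \partial_{\theta_1})$ and $\Ric(\partial_{\theta_1}, \partial_{\theta_2}) \mapsto -\Ric(\partial_{\theta_1}, \partial_{\theta_2})$, forcing both to vanish; the reflection in $\theta_2$ kills $\Ric(\partial_r, \partial_{\theta_2})$ in the same way. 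As the frame $\{\partial_r, \partial_{\theta_1}, \partial_{\theta_2}\}$ is orthogonal, this shows $\Ric$ is diagonal.

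The main obstacle is the diagonalization step in (a): the family of orbit metrics $g_r$ need not be simultaneously diagonalizable by a single constant change of basis, since the eigendirections of the shape operator $S(r)$ may rotate with $r$. To close this I would show these principal directions are in fact $r$-independent in the present situation — for instance by differentiating the off-diagonal component $g_{12}(r)$ along $\gamma$ and using the Riccati equation $S' + S^2 + R_{\partial_r} = 0$ for the curvature operator $R_{\partial_r}(\cdot) = R(\cdot, \partial_r)\partial_r$, together with the curvature constraints coming from the quasi-Einstein equation, so that $g_{12}$, which already vanishes to second order at $x$, vanishes identically. This is the delicate point; everything else is bookkeeping in the adapted coordinates.
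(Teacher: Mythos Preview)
The paper's own proof of this proposition consists of two citations: Proposition~1.4(e) of \cite{Podesta94} for part~(a) and Section~4.2.4 of \cite{Petersen16} for part~(b). Your reflection argument for (b) is correct and in fact supplies the content behind the Petersen reference, so that part is fine and arguably an improvement.

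The gap is exactly where you locate it, but your proposed fix does not close it. You want to force $g_{12}\equiv 0$ by combining the Riccati equation with ``curvature constraints coming from the quasi-Einstein equation''; however, the proposition as stated carries no quasi-Einstein hypothesis --- it is a statement about an arbitrary cohomogeneity one $3$-manifold --- so you are not entitled to import those constraints from the downstream application. Without them the argument genuinely fails: a generic $\mathbb{T}^2$-invariant metric $dr^2+\sum_{i,j} g_{ij}(r)\,d\theta_i\,d\theta_j$ has a cross term $g_{12}(r)$ that cannot be removed by a \emph{constant} change of basis in $\mathfrak{t}^2$, because the eigendirections of $(g_{ij}(r))$ rotate with $r$. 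Choosing the basis to diagonalize the shape operator at $r=0$ buys you only $g_{12}(0)=g_{12}'(0)=0$; the Riccati equation $S'+S^2+R_{\partial_r}=0$ then couples $g_{12}''$ to the off-diagonal part of $R_{\partial_r}$, which is unconstrained in this generality. The assertion that $g_{12}$ ``already vanishes to second order'' therefore does not propagate.

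If you want a self-contained proof of (a) you must isolate a structural reason --- for example, conditions on $G$ or on the isotropy representation at the singular orbits --- that forces the principal directions to be $r$-independent. That is presumably what the Podest\`a reference provides; the quasi-Einstein equation is not that reason, and invoking it here would in any case blur the logical separation between this lemma and the theorem that uses it.
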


\begin{proof}
    For part (a), we refer to Proposition 1.4 (e) in \cite{Podesta94}.  For part (b), observe that the metric in part (a) only depends on one of the coordinates.  By an argument in Section 4.2.4 of \cite{Petersen16}, this implies the Ricci tensor is diagonal with respect to these coordinates.
\end{proof}

The above proposition is useful to prove the following rigidity result, showing that quasi-Einstein solutions on closed manifolds of constant scalar curvature where the integral curves of $X$ are not closed are not possible in dimension three.

\begin{thm}
    Let $X$ be a smooth vector field on a closed manifold $(M,g)$ of constant scalar curvature with $\mathrm{dim}(M)= 3$.  If $X$ is not identically zero and if the integral curves generated by $X$ are not closed, then the triple $(M, g, X)$ cannot be quasi-Einstein.
\end{thm}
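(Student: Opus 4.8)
The plan is to argue by contradiction, combining the cohomogeneity-one structure produced by the preceding lemmas with the dichotomy of Proposition 2.5. Suppose $(M,g,X)$ were quasi-Einstein. Since $(M,g)$ is closed of constant scalar curvature, the results of \cite{Cochran2024} and \cite{Bahuaud24} show $X$ is a Killing field of constant length; in particular $\mathcal{L}_X g = 0$, so the quasi-Einstein equation reduces to $\mathrm{Ric} = \lambda g + \frac{1}{m}\,X^*\otimes X^*$. Because the integral curves of $X$ are assumed not to be closed, Lemma \ref{lem:1} places us in its second alternative, giving an isometric $\mathbb{T}^2$-action on $M$; as noted after that lemma we may write $X = aK_1 + bK_2$ for commuting Killing fields $K_1, K_2$ whose flows generate the torus.

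Since $\dim M = 3$ and the closure of the flow of $X$ is a $2$-torus whose orbit through a point is the $2$-dimensional closure of a non-closed integral curve, the $\mathbb{T}^2$-action has $2$-dimensional principal orbits and is cohomogeneity one. Fixing a regular point and applying the preceding Proposition, I obtain a neighborhood $U$ on which $g = dr^2 + f^2(r)\,d\theta_1^2 + h^2(r)\,d\theta_2^2$ with $\mathrm{Ric}$ diagonal in the coordinate frame $\{\partial_r,\partial_{\theta_1},\partial_{\theta_2}\}$. The fields $\partial_{\theta_1},\partial_{\theta_2}$ are Killing and lie in the Lie algebra of the torus, so $X = \alpha\,\partial_{\theta_1} + \beta\,\partial_{\theta_2}$ for constants $(\alpha,\beta)\neq(0,0)$, whence $X^* = \alpha f^2\,d\theta_1 + \beta h^2\,d\theta_2$.

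Evaluating the reduced quasi-Einstein equation on $(\partial_{\theta_1},\partial_{\theta_2})$ yields $0 = \mathrm{Ric}(\partial_{\theta_1},\partial_{\theta_2}) = \frac{1}{m}\,\alpha\beta f^2 h^2$, and since $f,h>0$ and $m\neq 0$ this forces $\alpha\beta = 0$. After relabeling the two angular coordinates I may assume $\beta = 0$, so $X = \alpha\,\partial_{\theta_1}$ with $\alpha\neq 0$. Then $|X|^2 = \alpha^2 f^2(r)$ is constant, forcing $f$ to be constant; the metric on $U$ therefore splits as a Riemannian product $S^1 \times \Sigma$ and the unit field $K = X/|X|$ is parallel. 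The Bochner computation from the proof of Proposition 2.5 then gives $\lambda + \frac{|X|^2}{m} = |\nabla K|^2 = 0$, so Proposition 2.5 forces the global splitting $(M,g) = (N\times S^1,\, g_N + d\theta^2)$ with $X$ tangent to the $S^1$ factor. But the integral curves of such an $X$ wind around a circle and are closed, contradicting the hypothesis. Hence no quasi-Einstein structure exists.

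I expect the main obstacle to be precisely the degenerate case $\alpha\beta = 0$, where $X$ is aligned with a diagonalizing direction and the off-diagonal argument says nothing: here $\partial_{\theta_1}$ may well be an irrational, dense direction on the torus, so one cannot conclude directly that its orbits are closed. The resolution is to exploit constant length to collapse $f$ to a constant, turning $U$ into a local product and activating the Bochner identity together with Proposition 2.5 to produce a genuine global product with closed circle fibers. A secondary point requiring care is the justification that $\partial_{\theta_1},\partial_{\theta_2}$ are Killing fields in the torus Lie algebra, which is what makes $\alpha,\beta$ constant and legitimizes reading off the components of $X^*$.
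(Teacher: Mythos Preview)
Your argument is correct and reaches the same contradiction as the paper, but by a genuinely different route once the diagonal cohomogeneity-one form $g = dr^2 + f^2(r)\,d\theta_1^2 + h^2(r)\,d\theta_2^2$ is in hand. The paper argues purely with eigenvectors: $X$ is a Ricci eigenvector with eigenvalue $\lambda + |X|^2/m$, and the coordinate fields $\partial_{\theta_i}$ are Ricci eigenvectors with eigenvalues $\lambda_i$; since the $\partial_{\theta_i}$ generate the $\mathbb{T}^2$ action and hence have closed orbits while $X$ does not, both coefficients in $X = a\,\partial_{\theta_1} + b\,\partial_{\theta_2}$ are nonzero, which forces $\lambda_1 = \lambda_2 = \lambda + |X|^2/m$. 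The eigenspace of this eigenvalue is then at least two-dimensional, contradicting the one-dimensional eigenspace dictated by $\mathrm{Ric} = \lambda g + \tfrac{1}{m}X^*\otimes X^*$. Your version instead reads $\alpha\beta = 0$ off the $(\partial_{\theta_1},\partial_{\theta_2})$-component, then feeds the aligned case $X \parallel \partial_{\theta_1}$ through constant length $\Rightarrow f$ constant $\Rightarrow \nabla K = 0$ locally $\Rightarrow \lambda + |X|^2/m = 0$ globally $\Rightarrow$ the product splitting of Proposition~2.5 $\Rightarrow$ closed orbits. The paper's route is shorter because it disposes of the case $\alpha\beta = 0$ in one line (the diagonal generators already have closed orbits); your route is longer but does not rely on knowing that the diagonalizing $\partial_{\theta_i}$ are lattice directions, and it ties the argument back to the $dX^* = 0$ dichotomy of Section~2 in a pleasing way.
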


\begin{proof}
    Suppose $(M, g, X)$ is a quasi-Einstein metric, where $(M, g)$ and $X$ satisfy the above assumptions.  As shown in \cite{Ghosh20} and \cite{Bahuaud24}, $X$ must be Killing and $|X|$ must be constant on $M$.  The quasi Einstein equations therefore yield
        \begin{align}
            \mathrm{Ric}(X, X) = \lambda |X|^2 + \frac{|X|^4}{m}.
        \end{align}
        
    or, equivalently, 

        \begin{align}
            \mathrm{Ric}(X) = \lambda X + \frac{|X|^2}{m}X = \left (\lambda+\frac{|X|^2}{m} \right )X
        \end{align}

    \noindent where we define $\mathrm{Ric(X)}$ to be the unique vector such that $\mathrm{Ric}(X, X) = g(\mathrm{Ric}(X), X)$. \\
        
    This shows that $X$ is an eigenvector for the Ricci tensor.  Since the integral curves of the Killing field $X$ are not closed, there must be an isometric $\mathbb{T}^2$ action on $M$ (whose orbits contain integral curves of $X$) by Lemma \ref{lem:1}.  Now, let $x \in M$ be a regular point in $M$ with respect to this $\mathbb{T}^2$ action.  Note that $M$ is cohomogeneity one with respect to this action.  By the previous proposition, in a neighborhood $U$ around $x$, the metric can we written as $g = dr^2+f^2(r)d\theta_1^2+h^2(r)d\theta_2^2$ where the Killing vector fields $\frac{\partial}{\partial\theta_1}, \frac{\partial}{\partial\theta_2}$ generate the $\mathbb{T}^2$ action.  Also by the previous proposition, the Ricci tensor is diagonal in $U$, and so we can assume that $\frac{\partial}{\partial\theta_1}$ and $\frac{\partial}{\partial\theta_2}$ are eigenvectors for the Ricci tensor.  Let $\lambda_1$ and $\lambda_2$ be the corresponding eigenvalues, respectively.  \\
    
    Recall that since $X$ is Killing, and since the integral curves of $X$ are contained in the $\mathbb{T}^2$ action, we can write $X = a \frac{\partial}{\partial\theta_1} + b \frac{\partial}{\partial\theta_2}$ for constants $a, b$.  We have that 
    
    \[\mathrm{Ric}(X)= \mathrm{Ric} \left (a \frac{\partial}{\partial\theta_1}+b \frac{\partial}{\partial\theta_2} \right ) = a\lambda_1\frac{\partial}{\partial\theta_1} + b\lambda_2 \frac{\partial}{\partial\theta_2} \]

    \noindent Continuing this calculation, and recalling $X$ is an also eigenvalue of the Ricci tensor, we further have that 

    \begin{align*}
        \mathrm{Ric}(X) &= a\lambda_1g\frac{\partial}{\partial\theta_1} + b\lambda_2 \frac{\partial}{\partial\theta_2} \\ & = \lambda_3X = \lambda_3 \left (a \frac{\partial}{\partial\theta_1} + b \frac{\partial}{\partial\theta_2}\right ) \\&= a\lambda_3 \frac{\partial}{\partial\theta_1} + b\lambda_3g\frac{\partial}{\partial\theta_2}.
    \end{align*}

    \noindent implying that

    \[ a(\lambda_1-\lambda_3)\frac{\partial}{\partial\theta_1}+ b(\lambda_2 - \lambda_3)\frac{\partial}{\partial\theta_2} = 0. \]

    \noindent By linear independence of $\frac{\partial}{\partial \theta_1}$ and $\frac{\partial}{\partial \theta_2}$, we can conclude that $\lambda_1 = \lambda_2 = \lambda_3$, and thus, the multiplicity of the eigenvalue associated to $X$ has multiplicity two (or greater).  However, by the quasi-Einstein equation, the eigenspace of the eigenvalue associated to $X$ has dimension one, which is a contradiction.  Therefore, we conclude that $(M,g,X)$ cannot be quasi-Einstein in this case, as desired.
    
\end{proof}

\section*{Acknowledgments}

The author of this paper would like to thank his PhD advisor, William Wylie, for his helpful guidance, comments, and insights during the process of writing this paper.

\bibliographystyle{alpha}
\bibliography{bibfile}

\vspace{3mm}

\noindent \textsc{215 Carnegie Building, Dept. of Math,
   Syracuse University,
 Syracuse, NY, 13244.}
\textit{E-mail: }\href{mailto:ecochr01@syr.edu}{ecochr01@syr.edu}

\end{document}